\documentclass[oneside, reqno] {amsart}
\usepackage{graphicx} % Required for inserting images
\usepackage{amsmath}
\usepackage{amssymb}
\usepackage{euscript}
\usepackage{empheq}
\usepackage{amsthm}
\usepackage{mathtools}
\usepackage[dvipsnames]{xcolor}
\usepackage{xcolor}
\usepackage{enumitem}
\usepackage{cite} %Imports biblatex package
\usepackage{mathrsfs}

\usepackage{hyperref}
\hypersetup{
    colorlinks=true,
    linkcolor=blue,
    filecolor=magenta,      
    urlcolor=cyan,
    citecolor=blue,
    pdftitle={},
    pdfpagemode=FullScreen,
    }

\DeclareMathOperator*{\esssup}{ess\,sup}
\DeclareMathOperator\supp{supp}

\newtheorem{theorem}{Theorem}[section]
\newtheorem{corollary}{Corollary}[theorem]
\newtheorem{lemma}[theorem]{Lemma}
\numberwithin{equation}{section}

\theoremstyle{definition}
\newtheorem{definition}{Definition}[section]

\title[global small solutions and smoothing]{Existence, Uniqueness, and Smoothing for Generalized EMHD}

\begin{document}
\author{Chao Wu}
\address{Department of Mathematics, Statistics and Computer Science, University of Illinois at Chicago, Chicago, IL 60607, USA}
\email{cwu206@uic.edu}

\begin{abstract}
We study the Cauchy problem for generalized electron magnetohydrodynamics (EMHD). We establish the local existence and uniqueness of solutions in critical Sobolev spaces, as well as the global existence and uniqueness for small initial data. In addition, we prove an instantaneous Gevrey smoothing effect for the corresponding solutions. Finally, we derive time decay rates for the global solutions.
\end{abstract}

\allowdisplaybreaks
\counterwithin{equation}{section}
\nopagebreak
\maketitle
%\tableofcontents

\section{Introduction}
\noindent As an important mathematical model describing fast magnetic reconnection phenomena in plasma physics, the incompressible magnetohydrodynamics (MHD) system with Hall effect is given by
\begin{equation*}\label{HMHD}
    \begin{cases}
        u_t+(u\cdot\nabla )u-(B\cdot\nabla )B+\nabla P+\nu\Lambda^2u=0\\
        B_t+(u\cdot\nabla)B-(B\cdot\nabla)u+\nabla\times((\nabla\times B)\times B)+\mu\Lambda^2B=0\\
        \nabla\cdot u=0,\quad\nabla\cdot B=0,
    \end{cases}
\end{equation*}
which is also known as Hall magnetohydrodynamics (HMHD). Having been derived in \cite{arichetogaray2011kinetic}, the HMHD differs from the classical MHD system by including the additional Hall term 
$\nabla\times((\nabla\times B)\times B)$. As customary, the unknowns of the Hall MHD are the velocity field $u$, the magnetic field $B$, and the fluid pressure $P$. The parameters $\nu$ and $\mu$ represent kinetic viscosity and magnetic resistivity, respectively. The Hall term is notoriously more challenging than other nonlinear terms in the system. To have a thorough understanding of the nonlinear structure of the Hall term, we set $u=0$ and consider the electron MHD with a more generalized version
\begin{equation}\label{EMHD}
    \begin{cases}
        B_t+\mu\Lambda^{\kappa}B+\nabla\times((\nabla\times\Lambda^{-s} B)\times B)=0\\
        \nabla\cdot B=0
    \end{cases}
\end{equation}
which is introduced in \cite{chae2023active}. The electron MHD recounts the magnetohydrodynamics with static background ion flow. It has been studied by both mathematicians and physicists. The local and global existence of Hall MHD with full dissipation has been widely studied, for example, \cite{danchin2022global}, \cite{fujii2024global}, \cite{ferreira2024well}, \cite{liu2021global}, \cite{chae2014well}, \cite{liu2022global}, \cite{houamed2021well}. The local well-posedness of Hall MHD with reduced dissipation was also studied, for example \cite{dai2022well}. For EMHD, several local well-posedness results were established, for example \cite{jeong2025wellposedness}, \cite{chae2023active}. In addition to well-posedness results, several illposedness results were also obtained, for example \cite{chae2023illposedness}, \cite{dai2024ill}, \cite{jeong2022cauchy}. Note that if $B(t,x)$ is a solution to (\ref{EMHD}) corresponding to the initial data $B_0(x)$, then
$$B_{\lambda}(t,x)=\lambda^{\kappa+s-2}B(\lambda^{\kappa}t,\lambda x) $$
is also a solution corresponding to the initial data $\lambda^{\kappa+s-2}B_0(\lambda x)$.
This implies that a critical homogeneous Sobolev space is $\dot{H}^{\sigma_c}$, where $\sigma_c:=3.5-s-\kappa.$ In the regime of $s=0$, several results were obtained. The local well-posedness of (\ref{EMHD}) with $\kappa>1$ was obtained in subcritical spaces, see \cite{chae2023active} for more details. In the regime $\kappa\le1$, ill-posedness results for (\ref{EMHD}) were established, for example, \cite{jeong2022cauchy}. We will study (\ref{EMHD}) in the supercritical and critical regime $1<\kappa<2$ with $\mu=1$. We establish the existence and
uniqueness of solutions for arbitrary initial data in $H^{\sigma_c}$, global existence when the corresponding homogeneous Sobolev norm of the initial data is sufficiently small, and establish Gevrey regularity for the unique solution (see Theorem \ref{theorem for emhd}).

\section{Mathematical preliminaries}
\subsection{Notation} A general constant which is not important to track is denoted by $C$, which may be different from line to line. We use $\epsilon$ and $\delta$ to denote small real numbers. An element in a general sequence that is not important to track is denoted by $c_j$ and the corresponding sequence is denoted by $\{c_j\}_{j\in\mathbb{Z}}$ or $\{c_j\}$, which may be different from line to line. The notation $\lesssim$ is used to replace $\leq$ up to a multiplication of a constant. We will denote by $\mathcal{B}_j$ the ball of radius $2^{j+1}$ centered at the origin and by $\mathcal{A}_j$ the ring obtained in $\{x\in\mathbb{R}^n:2^{j-1<}|x|<2^{j+1}\}$.
\subsection{Littlewood-Paley Analysis} We will apply frequency localization techniques frequently, so we recall the basics of Littlewood-Paley theory on $\mathbb{R}^n$. Define the radial function $\varphi\in C_c^{\infty}(\mathbb{R}^n)$ as
\begin{equation}
    \chi(\xi)=
    \begin{cases}
        1,\text{ for } |\xi|\leq\frac{3}{4},\\
        0,\text{ for } |\xi|\geq 1
    \end{cases}
\end{equation}
Denote $\varphi=\chi(\frac{\xi}{2})-\chi(\xi)$ and $\varphi_j(\xi)=\varphi(2^{-j}\xi)$. The $j$-th homogeneous Littlewood-Paley projection of a tempered distribution vector field $u$ on $\mathbb{R}^n$ is defined by
$$u_j(x)=\Delta_ju(x):=\mathcal{F}^{-1}(\varphi_j\hat{u})(x),$$
where $\hat{u}$ is the Fourier transform of $u$, $\mathcal{F}$ is the Fourier transformation operator, and $\mathcal{F}^{-1}$ is the inverse of $\mathcal{F}$. The unit decomposition
$$u=\sum_{j\in\mathbb{Z}}u_j$$
holds in the distribution sense. Denote
$$u_{\leq k}=\sum_{j\le k}u_j,\quad\Delta_{\tilde{k}}u=u_{\tilde{k}}:=u_{k-1}+u_k+u_{k+1}.$$
we denote
$$u_{jk}=\Delta_k\Delta_ju.$$
It is convenient to use the equivalent norm of $u$ in the space $\dot{H}^s$
$$\|u\|_{\dot{H}^s}=\left(\sum_{j\in\mathbb{Z}}2^{2sj}\|u_j\|_{L^2}^2 \right)^{\frac{1}{2}}.$$
We will use the following Bony's paraproduct formula
\begin{align*}
    \Delta_j(u\cdot v)=\sum_{|k-j|\le2}\Delta_j(u_{\le k-2}\cdot v_k)+\sum_{|k-j|\leq 2}\Delta_j(u_{k}\cdot v_{\le k-2})+\sum_{k\ge j-2}\Delta_j(\tilde{u}_k\cdot v_k),
\end{align*}
which applies to scalar functions as well. We will also frequently use the following Bernstein inequality:
\begin{lemma}
    Assume that $\supp{\hat{u}}\in\mathcal{A}_j$. Then for $1\le p<q\le\infty$, the following hold
    \begin{align*}
        &2^j\|u\|_{L^p}\simeq\|\nabla u\|_{L^p}\\
        &2^{-j}\|\nabla u\|_{L^p}\simeq\|u\|_{L^p}\\
        &\|u\|_{L^q}\le C2^{n(1/p-1/q)}\|u\|_{L^p}
    \end{align*}
\end{lemma}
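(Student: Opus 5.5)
The plan is the standard reproducing-kernel argument. Note first that the third inequality as written should carry the factor $2^{jn(1/p-1/q)}$; the scaling argument below produces exactly this.

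\textbf{Step 1: a localized kernel.} Choose a radial function $\tilde\varphi\in C_c^\infty(\mathbb{R}^n)$ with $\tilde\varphi\equiv1$ on a neighborhood of the annulus $\{1/2\le|\xi|\le2\}$. Its support should be contained in a slightly larger annulus that still avoids the origin. Set $\tilde\varphi_j(\xi)=\tilde\varphi(2^{-j}\xi)$ and $h=\mathcal{F}^{-1}\tilde\varphi\in\mathcal{S}$. If $\supp\hat u\subset\mathcal{A}_j$, then $\hat u=\tilde\varphi_j\hat u$, and hence
$$u=h_j*u,\qquad h_j(x)=2^{jn}h(2^jx).$$

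\textbf{Step 2: the two derivative bounds.} For the upper bound on $\nabla u$, write $\nabla u=(\nabla h_j)*u$. Since $\nabla h_j(x)=2^{j}2^{jn}(\nabla h)(2^jx)$, we have $\|\nabla h_j\|_{L^1}=2^j\|\nabla h\|_{L^1}$, and Young's inequality gives
$$\|\nabla u\|_{L^p}\le C2^j\|u\|_{L^p}.$$
For the lower bound, use that $\tilde\varphi$ vanishes near the origin. This lets us write
$$\hat u=\sum_k \frac{-i\xi_k\tilde\varphi_j(\xi)}{|\xi|^2}\,\widehat{\partial_k u}.$$
Each multiplier $m_k(\xi)=-i\xi_k\tilde\varphi(\xi)/|\xi|^2$ is smooth and compactly supported, so $g_k:=\mathcal{F}^{-1}m_k\in\mathcal{S}$. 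At scale $j$ the multiplier is $2^{-j}m_k(2^{-j}\xi)$, and its kernel has $L^1$ norm $2^{-j}\|g_k\|_{L^1}$. Young's inequality then gives
$$\|u\|_{L^p}\le C2^{-j}\|\nabla u\|_{L^p}.$$
Together these two bounds give both equivalences $2^j\|u\|_{L^p}\simeq\|\nabla u\|_{L^p}$ and $2^{-j}\|\nabla u\|_{L^p}\simeq\|u\|_{L^p}$, which are the same statement.

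\textbf{Step 3: the $L^p\to L^q$ bound.} From $u=h_j*u$, Young's inequality with $1+1/q=1/r+1/p$ gives $\|u\|_{L^q}\le\|h_j\|_{L^r}\|u\|_{L^p}$. Scaling yields
$$\|h_j\|_{L^r}=2^{jn(1-1/r)}\|h\|_{L^r}=2^{jn(1/p-1/q)}\|h\|_{L^r},$$
which is finite because $h$ is Schwartz.

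\textbf{Main obstacle.} The only delicate point is the lower bound in Step 2. It requires the frequency support to stay away from the origin, which is exactly why the annulus $\mathcal{A}_j$ is needed rather than a ball; for the Bernstein inequality alone, support in a ball $\mathcal{B}_j$ would suffice. The remaining steps are routine scaling computations.
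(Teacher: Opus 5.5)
Your proof is correct. The paper gives no proof of this lemma and simply recalls it as the standard Bernstein inequality; your argument is the standard textbook proof. It writes $u=h_j*u$ with $\tilde\varphi_j\equiv1$ near the support, obtains the upper bounds from Young's inequality plus scaling, and obtains the reverse bound from the smooth multiplier $-i\xi_k\tilde\varphi_j(\xi)/|\xi|^2$, which is where the annulus is needed. You are also right that the third inequality must read $\|u\|_{L^q}\le C2^{jn(1/p-1/q)}\|u\|_{L^p}$, since the paper's version without the factor $j$ fails for large $j$.
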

\subsection{Gevrey Class}
We recall the Gevrey classes in this section. These spaces identify a scale of subspaces between the space of analytic functions and the space of smooth functions. Let $\alpha\in(0,1]$ and $\lambda>0$. Then the Gevrey operator $G_{\alpha}^{\lambda(t)}$ of order $\alpha$ and
radius $\lambda(t)$ by
\begin{align*}
    \mathcal{F}\left(G_{\alpha}^{\lambda(t)}f\right)(\xi)=e^{\lambda(t)|\xi|^{\alpha}}\hat{f}(\xi)
\end{align*}
respectively. We will also use the following notation $$e^{\lambda(t)\Lambda^{\alpha}}f=G_{\alpha}^{\lambda(t)}f.$$
We define the homogeneous Sobolev space based Gevrey norm by
$$\|f\|_{G_{\alpha,\sigma}^{\lambda}}=\|G_{\alpha}^{\lambda}f\|_{\dot{H}^{\sigma}}$$
An important observation is that if $\|f\|_{G_{\alpha,\sigma}^{\lambda}}$ is finite for some $\sigma\in\mathbb{R}$ and $\alpha,\lambda>0$, then
\begin{align*}
    \|\partial^{\beta}f\|_{\dot{H}^{\sigma}}\le\left(\frac{\beta!}{(\lambda\alpha)^{\beta}}\right)^{\frac{1}{\alpha}}\|f\|_{G_{\alpha,\sigma}^{\lambda}}
\end{align*}
for all multi-indices $\beta$.
To obtain the Gevrey smoothing results, we need the following operator.
\begin{align*}
    \mathcal{F}(E_{\alpha}^{\lambda}f)(\xi):=\left(\int_0^1e^{\lambda\tau|\xi|^{\alpha}}d\tau \right)\hat{f}(\xi)
\end{align*}
\subsection{Commutator estimates} To explore cancellations of the nonlinear term, we need the following commutators:
\begin{definition}\label{def 1}
    Let $f,g$ be vector valued functions. Then we define 
    \begin{align}\label{commutator 1}
        [\Delta_j,g\times\nabla\times]f:=\Delta_j(g\times\nabla\times f)-g\times\nabla\times f_j
    \end{align}
\end{definition}
\begin{definition}\label{def 2}
    Let $f,g$ be scalar functions. Then we define
    \begin{align}\label{commutator 2}
        [\Delta_j,g]f:=\Delta_j(gf)-g\Delta_jf
    \end{align}
\end{definition}
\noindent We will use the following lemmas frequently in a priori estimates.
\begin{lemma}\label{commutator estimate}
    Let $f$ $g$ and $h$ be vector functions. Assume that $\supp{\hat{g}}\in\mathcal{B}_j$. Then we have
    \begin{align*}
        &\langle[\Delta_jG_{\alpha}^{\lambda(t)},g\times\nabla\times]f,\nabla\times h\rangle\\
        \lesssim &2^{-j}2^{\epsilon j}\left\|\Lambda^{1+\frac{n}{2}-\epsilon}\tilde{g}\right\|_{L^{2}}\left\|\Lambda\tilde{f}\right\|_{L^{2}}\|\Lambda h\|_{L^2}\\
        &+2^{-j}2^{(\alpha-1) j}2^{\epsilon}\lambda(t)\left\|\Lambda^{1+\frac{n}{2}-\epsilon}E_{\alpha}^{\lambda}g\right\|_{L^2}\left\|\Lambda\tilde{f}\right\|_{L^{2}}\|\Lambda h\|_{L^2}
    \end{align*}
\end{lemma}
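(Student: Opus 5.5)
Since $\supp\hat g\subset\mathcal{B}_j$ is a low-frequency function relative to the block $\Delta_j$, I will write the commutator as a bilinear Fourier multiplier and bound its symbol by a mean-value argument. Set $m_j(\xi)=\varphi_j(\xi)e^{\lambda(t)|\xi|^{\alpha}}$. Then
\begin{align*}
\mathcal{F}\big([\Delta_jG_{\alpha}^{\lambda},g\times\nabla\times]f\big)(\xi)=\int \big(m_j(\xi)-m_j(\xi-\eta)\big)\,\hat g(\eta)\times\big(i(\xi-\eta)\times\hat f(\xi-\eta)\big)\,d\eta .
\end{align*}
The factor $\hat g(\eta)$ has $|\eta|\lesssim 2^j$. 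Because $m_j$ is supported where $|\xi|\sim2^j$, only $|\xi|,|\xi-\eta|\lesssim 2^j$ contribute, so $f$ may be replaced by $\tilde f$ (its frequencies near $2^j$, more precisely $\lesssim 2^{j}$). Similarly, the $\nabla\times h$ side can be localized near $2^j$.

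\textbf{Step 1: splitting the symbol difference.} I split
\begin{align*}
m_j(\xi)-m_j(\xi-\eta)=&\big(\varphi_j(\xi)-\varphi_j(\xi-\eta)\big)e^{\lambda|\xi|^\alpha}\\
&+\varphi_j(\xi-\eta)\big(e^{\lambda|\xi|^\alpha}-e^{\lambda|\xi-\eta|^\alpha}\big).
\end{align*}
For the first piece, the mean value theorem gives $|\varphi_j(\xi)-\varphi_j(\xi-\eta)|\lesssim 2^{-j}|\eta|$. For the second piece, I use
\[
e^{a}-e^{b}=(a-b)\int_0^1 e^{\tau a+(1-\tau)b}\,d\tau
\]
together with $\big||\xi|^\alpha-|\xi-\eta|^\alpha\big|\lesssim 2^{(\alpha-1)j}|\eta|$ on the relevant frequency region, or $\le|\eta|^\alpha$ by subadditivity when $\alpha\le1$. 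The exponential $e^{\tau\lambda|\xi|^{\alpha}}$ is then distributed via $|\xi|^\alpha\le|\eta|^\alpha+|\xi-\eta|^\alpha$. The weights on $\eta$ produce the operator $E_\alpha^\lambda$ acting on $g$, which explains its appearance. The weights on $\xi-\eta$ are absorbed into the Gevrey-weighted $f$ and $h$ as in the intended application, where the estimate is used for $G^\lambda f$ and the extra $e^{\lambda|\cdot|^\alpha}$ factor on the high side is harmless. This piece carries the factor $\lambda(t)\,2^{(\alpha-1)j}|\eta|$.

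\textbf{Step 2: Hölder and Bernstein.} In both pieces a factor $|\eta|$ lands on $g$, giving $\nabla g$. A factor $2^{-j}$ appears explicitly in the first piece and as $2^{-j}2^{(\alpha-1)j}\lambda$ in the second. I pair with $\nabla\times h$ and move the curl so that it costs one derivative on $h$; since $h$ is effectively at frequency $2^j$, this gives $\|\Lambda h\|_{L^2}$. I place $\Lambda\tilde f$ in $L^2$ and $\nabla g$ in $L^\infty$. The $L^\infty$ bound comes from Bernstein's inequality block by block:
\[
\|\nabla g\|_{L^\infty}\lesssim\sum_{k\le j+1}2^{k(1+\frac n2)}\|g_k\|_{L^2}\le 2^{\epsilon j}\|\Lambda^{1+\frac n2-\epsilon}g\|_{L^2}\Big(\sum_{k\le j+1}2^{2\epsilon(k-j)}\Big)^{1/2}.
\]
The geometric sum converges because $\epsilon>0$. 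This yields the factor $2^{\epsilon j}\|\Lambda^{1+\frac n2-\epsilon}\tilde g\|_{L^2}$ in the first term, and the same bound with $E^\lambda_\alpha g$ in place of $g$ in the second term. The bilinear multiplier is written as $\int_0^1$ of operators with bounded kernels, with the $\eta$-dependence handled by the Taylor/Coifman–Meyer-type expansion.

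\textbf{Main obstacle.} The main difficulty is Step 1's exponential piece: controlling $e^{\lambda|\xi|^\alpha}$ versus $e^{\lambda|\xi-\eta|^\alpha}$ uniformly in $\lambda$ without losing more than $\lambda|\eta|2^{(\alpha-1)j}$. I would first try the convexity bound $e^{\tau\lambda|\xi|^\alpha}\le e^{\tau\lambda|\eta|^\alpha}e^{\tau\lambda|\xi-\eta|^\alpha}$ and then verify that the resulting symbol is a smooth bilinear multiplier, so that its kernel is integrable uniformly in $\tau$. The cases $|\eta|\ll2^j$ and $|\eta|\sim2^j$ need to be separated; in the latter case the crude bound $\lesssim|\eta|^\alpha\lesssim 2^{(\alpha-1)j}|\eta|$ suffices.
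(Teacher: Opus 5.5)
Your Step 1 is essentially the paper's argument. The paper also writes the commutator as a trilinear Fourier integral and splits the symbol difference into a $\varphi_j$-difference, bounded by $2^{-j}|\eta|$, and an exponential difference. It bounds the latter pointwise by $\lambda|\eta|2^{(\alpha-1)j}e^{\lambda|\xi-\eta|^{\alpha}}\int_0^1e^{\lambda\tau^{\alpha}|\eta|^{\alpha}}\,d\tau$, and your casework plus the convexity bound reproduce this.

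\textbf{The gap is in Step 2.} Everything Step 1 gives you is a \emph{pointwise upper bound} on the bilinear symbol. In particular, $e^{\tau\lambda|\xi|^{\alpha}+(1-\tau)\lambda|\xi-\eta|^{\alpha}}\le e^{\tau\lambda|\eta|^{\alpha}}e^{\lambda|\xi-\eta|^{\alpha}}$ is an inequality, not a factorization. It is also the only mechanism by which $E_\alpha^\lambda g$ and the Gevrey weight on $f$ enter. A pointwise bound on a symbol gives no control of the corresponding operator in physical space. So you cannot then put $\nabla g$ (or $\nabla E_\alpha^\lambda g$) in $L^\infty$ and apply H\"older.

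To rescue that route you would need a Coifman--Meyer type bound for the exact quotient symbol, uniform in $\lambda 2^{\alpha j}$ and $\tau$. The required symbol estimates degenerate. Each $\eta$-derivative of $e^{\tau\lambda(|\xi|^{\alpha}-|\xi-\eta|^{\alpha})}$ costs $\lambda2^{(\alpha-1)j}$, i.e.\ an extra factor $\lambda 2^{\alpha j}$ relative to the $2^{-j}$ scale. Moreover, when $|\eta|\sim 2^j$ the output frequency $\xi$ can approach the origin, where $|\xi|^{\alpha}$ is not smooth. This is exactly the point you list as the main obstacle, and the proposal leaves it unresolved.

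There is also a problem with your ordering of the split. Your first piece carries $e^{\lambda|\xi|^{\alpha}}$ at the output frequency, and in a physical-space duality argument that weight lands on $h$, not on $f$. The paper's ordering, $(\varphi_j(\xi)-\varphi_j(\xi-\eta))e^{\lambda|\xi-\eta|^{\alpha}}$, makes that piece the classical commutator $[\Delta_j,g\times\nabla\times]G_\alpha^\lambda f$.

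\textbf{The missing idea is to stay on the Fourier side, as the paper does.}
\begin{itemize}
\item Take absolute values in the trilinear integral and bound it by $\iint|\sigma(\xi,\eta)|\,|\hat g(\eta)|\,|\xi-\eta|\,|\hat f(\xi-\eta)|\,|\xi|\,|\hat h(\xi)|\,d\eta\,d\xi$.
\item Insert your pointwise bounds. It is now legitimate to assign $|\eta|\int_0^1e^{\tau\lambda|\eta|^{\alpha}}d\tau$ to $g$ and $e^{\lambda|\xi-\eta|^{\alpha}}$ to $f$.
\item Close with Cauchy--Schwarz in $\xi$ and Young's convolution inequality $\|\,|\hat a|*|\hat b|\,\|_{L^2}\le\|\hat a\|_{L^1}\|\hat b\|_{L^2}$.
\end{itemize}
The $g$-factor becomes $\|\,|\eta|\hat g\|_{L^1(|\eta|\le 2^{j+1})}$, or its $E_\alpha^\lambda$ analogue. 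Cauchy--Schwarz against $|\eta|^{-\frac n2+\epsilon}$ on the ball gives $\lesssim 2^{\epsilon j}\|\Lambda^{1+\frac n2-\epsilon}g\|_{L^2}$. This is the same bound your Bernstein sum produced for $\|\nabla g\|_{L^\infty}$, so nothing is lost and no multiplier theorem is needed.
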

\begin{proof}
    By the definition of the commutator and Plancherel's theorem, we have
    \begin{align*}
        &\langle[\Delta_jG_{\alpha}^{\lambda(t)},g\times](\nabla\times f)\cdot\nabla\times h\rangle\\
        =&\int\bigg(\phi_j(\xi)e^{\lambda(t)|\xi|^{\alpha}}-\phi_j({\xi-\eta})e^{\lambda(t)|\xi-\eta|^{\alpha}}\bigg)\hat{g}(\eta)\times\bigg((\xi-\eta)\times\hat{f}(\xi-\eta) \bigg)\cdot\bigg(\xi\times\hat{h}(\xi)\bigg)d\eta d\xi
    \end{align*}
    Writing
    \begin{align*}
        &\phi_j(\xi)e^{\lambda(t)|\xi|^{\alpha}}-\phi_j({\xi-\eta})e^{\lambda(t)|\xi-\eta|^{\alpha}}\\
        =&\phi_j(\xi)\left(e^{\lambda(t)|\xi|^{\alpha}}-e^{\lambda(t)|\xi-\eta|^{\alpha}} \right)+\Big(\phi_j(\xi)-\phi_j({\xi-\eta}\Big)e^{\lambda(t)|\xi-\eta|^{\alpha}}
    \end{align*}
    Denote by $A(\tau)=\tau\xi+(1-\tau)(\xi-\eta)$. Then we have (See Lemma4.4 in \cite{jolly2021existence} for more details)
    \begin{align*}
        &\left|e^{\lambda(t)|\xi|^{\alpha}}-e^{\lambda(t)|\xi-\eta|^{\alpha}} \right|\\
        =&\left|\int_0^1\frac{d}{d\tau}\left(e^{\lambda|A(\tau|^{\alpha}}\right) \right|\lesssim\lambda|\eta|2^{(\alpha-1)j}e^{\lambda|\xi-\eta|^{\alpha}}\int_0^1e^{\lambda\tau^{\alpha}|\eta|^{\alpha}}d\tau
    \end{align*}
    writing
    \begin{align*}
        &\langle[\Delta_jG_{\alpha}^{\lambda(t)},g\times](\nabla\times f)\cdot\nabla\times f\rangle\\
        =&\int\frac{\phi_j(\xi)e^{\lambda(t)|\xi|^{\alpha}}-\phi_j({\xi-\eta})e^{\lambda(t)|\xi-\eta|^{\alpha}}}{|\eta|}\widehat{\Lambda g}(\eta)\times\bigg((\xi-\eta)\times\hat{f}(\xi-\eta) \bigg)\cdot\bigg(\xi\times\hat{h}(\xi)\bigg)d\eta d\xi,
    \end{align*}
    and applying H\"older's inequality, Young's inequality, and using the fact that the support of $g\in\mathcal{B}_j$, we complete the proof.
\end{proof}

\section{Statement of main theorems} 
\noindent In this section, we state the main theorems and provide immediate corollaries.
\begin{theorem}\label{theorem for emhd}
    Let $-\frac{1}{2}<s<\frac{1}{2}$ and $2-2s<\kappa<2.5-s$. Then For each $B_0\in H^{\sigma_c}$, there exits $T>0$ a unique solution $B$ of (\ref{EMHD}) such that
    \begin{align*}
        B\in C([0,T);H^{\sigma_c})\cap L^2(0,T;\dot{H}^{\sigma_c+\kappa/2}).
    \end{align*}
 Moreover, for $0<\delta<\alpha\le1$ with $\delta$ sufficiently small, there exists an increasing function $\lambda:[0,\infty)\to[0,\infty)$ with $\lambda(0)=0$ such that
    \begin{align*}
        \|B(t)\|_{G_{\alpha,\sigma_c+\delta}^{\lambda(t)}}\leq\|B_0\|_{\dot{H}^{\sigma_c}}t^{-\frac{\delta}{\kappa}}
    \end{align*}
    for all $0<t<T$, for some constant $C>0$ independent of T. Lastly, if $\|B_0\|_{\dot{H}^{\sigma_c}}$ is small enough, then $T=\infty$ is allowed.
\end{theorem}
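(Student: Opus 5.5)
The proof rests on an a priori energy estimate in the Gevrey-weighted critical Sobolev norm. We close that estimate with the commutator bound of Lemma \ref{commutator estimate}, and then obtain existence by a standard approximation and compactness argument.

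Write $B^\lambda := G_\alpha^{\lambda(t)} B$. The radius will be chosen as $\lambda(t)=c\,t^{\alpha/\kappa}$, so that it is increasing with $\lambda(0)=0$. We apply $\Delta_j G_\alpha^{\lambda(t)}$ to (\ref{EMHD}) and pair with $2^{2\sigma_c j}\Delta_j B^\lambda$. This gives
\begin{align*}
\frac12\frac{d}{dt}\|B^\lambda_j\|_{L^2}^2+2^{\kappa j}\|B^\lambda_j\|_{L^2}^2\lesssim \lambda'(t)2^{\alpha j}\|B^\lambda_j\|_{L^2}^2+\big|\langle \Delta_jG_\alpha^{\lambda}\nabla\times((\nabla\times\Lambda^{-s}B)\times B),B^\lambda_j\rangle\big|.
\end{align*}
The term coming from $\lambda'$ is absorbed by the dissipation whenever $\alpha<\kappa$, after using Young's inequality and $\lambda'\lambda^{(\kappa-\alpha)/\alpha}$ bounded, which is consistent with our choice of $\lambda$.

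For the nonlinear term we move the curl onto $B_j^\lambda$ and use Bony's decomposition with $g=B_{\le j-2}$ as the low-frequency factor. The most dangerous piece is the high-frequency term $\Lambda^{-s}B_j$, which carries one derivative too many. We handle it through the cancellation
\begin{align*}
\langle (\nabla\times\Lambda^{-s}B_j)\times B_{\le j-2},\nabla\times B_j\rangle,
\end{align*}
written as the commutator $[\Delta_jG_\alpha^\lambda, B_{\le j-2}\times\nabla\times]$ acting on $\Lambda^{-s}B$, plus a remainder. The remainder vanishes if $s=0$; for $s\neq0$ it is controlled by the symmetrized commutator of $\Lambda^{-s}$, which gains $2^{-j}|\eta|$. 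The commutator itself is bounded by Lemma \ref{commutator estimate}: it gains one derivative and costs a factor $\lambda\, 2^{(\alpha-1)j}$ together with $\|E^\lambda_\alpha g\|$. Since $E^\lambda_\alpha$ is dominated by $G^\lambda_\alpha$, every factor is expressed through $B^\lambda$.

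The paraproduct high-low and high-high pieces are estimated directly with Bernstein's inequality. The aim is to reach
\begin{align*}
\frac{d}{dt}\|B^\lambda\|_{\dot H^{\sigma_c}}^2+\|B^\lambda\|_{\dot H^{\sigma_c+\kappa/2}}^2\lesssim \|B^\lambda\|_{\dot H^{\sigma_c}}\|B^\lambda\|_{\dot H^{\sigma_c+\kappa/2}}^2 .
\end{align*}
The counting of derivatives works out exactly: the trilinear form carries $2+1-s$ derivatives and $\frac n2=\frac32$ from Bernstein, so criticality of $\sigma_c=3.5-s-\kappa$ fixes the balance. The range $2-2s<\kappa<2.5-s$ is what keeps $\sigma_c$ and the intermediate indices inside the admissible windows, in particular $\sigma_c<\frac32$ and the positivity needed for the high-high sum.

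We expect this step to be the main obstacle. Placing the fractional weight $\Lambda^{-s}$ inside the commutator structure without losing derivatives, and doing so uniformly in $\lambda$, needs the $2^{\epsilon j}$ room in the lemma, and that room is what forces $\delta$ to be small. If the direct bound fails, the first thing to try is to split $B=B_{\le Q}+B_{>Q}$ at a large frequency $Q$. The low part is then treated as smooth.

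Given the differential inequality, the global statement follows from a smallness bootstrap: if $\|B_0\|_{\dot H^{\sigma_c}}$ is small, the right side is absorbed and the norm stays small for all time, so $T=\infty$. For large data we use the frequency splitting. The part $B_{0,>Q}$ is small in $\dot H^{\sigma_c}$, and the remaining piece is handled by subcritical estimates on a short interval, which yields a $T>0$ depending on the profile of $B_0$ rather than only on its norm.

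The Gevrey-smoothing bound follows by running the same estimate in $\dot H^{\sigma_c+\delta}$ with the weight $t^{\delta/\kappa}$. Multiplying by $t^{2\delta/\kappa}$, the extra term $t^{2\delta/\kappa-1}\|B^\lambda\|^2_{\dot H^{\sigma_c+\delta}}$ is interpolated between $\dot H^{\sigma_c}$ and $\dot H^{\sigma_c+\kappa/2}$ and absorbed, since $\delta<\kappa/2$.

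Existence is obtained from Friedrichs-mollified approximations $\partial_tB^N+\Lambda^\kappa B^N+J_N\nabla\times((\nabla\times\Lambda^{-s}J_NB^N)\times J_NB^N)=0$, which satisfy the same uniform bounds, followed by Aubin--Lions compactness. Continuity in time in $H^{\sigma_c}$ comes from the energy equality. Uniqueness follows from an $L^2$ estimate of the difference $w=B_1-B_2$: the same cancellation kills the top-order term $\langle(\nabla\times\Lambda^{-s}w)\times B_1,\nabla\times w\rangle$ up to commutators. The remaining terms are controlled by $\|B_i\|_{\dot H^{\sigma_c+\kappa/2}}^2\in L^1_t$, and Gronwall's inequality closes the argument.
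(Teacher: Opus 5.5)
\textbf{Comparison with the paper.} Your route is a summed Gevrey energy inequality closed by a smallness bootstrap. The paper instead integrates each dyadic inequality in time as a Duhamel formula and closes in Kato-type time-weighted norms: $L^{\kappa/\delta}_T\dot H^{\sigma_c+\delta}$, $L^2_T\dot H^{\sigma_c+\kappa/2}$, and $X_T$. The difference matters, because both places where you put a time-dependent weight inside a summed energy inequality fail to close.

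\textbf{Main gap: the time-dependent weights.} With $\lambda=ct^{\alpha/\kappa}$ you have $\lambda'2^{\alpha j}\le\frac12 2^{\kappa j}$ only when $2^{j}\gtrsim c^{1/(\kappa-\alpha)}t^{-1/\kappa}$. Boundedness of $\lambda'\lambda^{(\kappa-\alpha)/\alpha}$ only tells you that $e^{\lambda|\xi|^\alpha}\lesssim 1$ on the remaining low frequencies. Young's inequality then leaves $C(\lambda')^{\kappa/(\kappa-\alpha)}\|B^\lambda\|_{\dot H^{\sigma_c}}^2\simeq c^{\kappa/(\kappa-\alpha)}\,t^{-1}\|B^\lambda\|_{\dot H^{\sigma_c}}^2$, which is not integrable at $t=0$. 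Gronwall started at $t_0$ gives a factor $(t/t_0)^{C}$, which blows up as $t_0\to0$. So the differential inequality you aim for, with no $\lambda'$ remainder, is not available for this $\lambda$.

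Your smoothing step fails the same way. All exponents are scale-critical, so interpolating $t^{2\delta/\kappa-1}\|B^\lambda\|_{\dot H^{\sigma_c+\delta}}^2$ and applying Young or H\"older in time leaves exactly $t^{-1}\|B^\lambda\|_{\dot H^{\sigma_c}}^2$. After integration this is the logarithmically divergent $\int_0^T t^{-1}\,dt$.

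\textbf{The missing idea.} You need to integrate each dyadic inequality in time \emph{before} summing in $j$. The block-wise factors $e^{\lambda(t)2^{\alpha j}-\frac12 t2^{\kappa j}}$ and $t^{\delta/\kappa}2^{\delta j}e^{-\frac12 t 2^{\kappa j}}$ are bounded uniformly in $t$ and $j$. The nonlinear Duhamel terms are then controlled either by Beta-function integrals or by Hardy--Littlewood--Sobolev in time. The first is the paper's $K_1$--$K_4$, where the $\lambda'$ contribution is made perturbative by the small constant $\epsilon$ in $\lambda=\epsilon t^{\alpha/\kappa}$; the second is its $\mathscr{T}_2$ and $\mathscr{S}_2$ bounds. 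Equivalently, you must control Chemin--Lerner quantities $\sum_j\sup_t$ rather than $\sup_t\sum_j$. This framework also gives large-data local existence directly, since the paper shows $\mathscr{T}_1(T)\to0$ and $\mathcal{I}_T(B_0)\to0$ as $T\to0$. Your frequency splitting is then unnecessary.

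\textbf{Two smaller points.} First, dominating $E^\lambda_\alpha$ by $G^\lambda_\alpha$ throws away the compensation in the Gevrey commutator. Its Gevrey part exceeds the ordinary commutator by a factor $\lambda 2^{\alpha j}$, which is unbounded for $2^j\gg t^{-1/\kappa}$. It becomes harmless only after two steps:
\begin{itemize}
\item move those derivatives onto the low mode, using $|\eta|2^{(\alpha-1)j}\lesssim|\eta|^\alpha$ for $|\eta|\lesssim 2^j$;
\item use that the symbol satisfies $\lambda|\eta|^\alpha\int_0^1 e^{\lambda\tau|\eta|^\alpha}d\tau\le e^{\lambda|\eta|^\alpha}$.
\end{itemize}
This is how the paper's bound on $\|E^\lambda_\alpha f\|_{\dot H^{\sigma_c+\delta/2+\alpha}}$ enters $K_3$ and $K_4$.

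Second, $\kappa>2-2s$ gives only $\sigma_c<\frac32+s$, so your claim $\sigma_c<\frac32$ fails for $s>0$. Your trilinear derivative count does not actually need it.
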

\noindent We will see in the proof of Theorem \ref{theorem for emhd} that
\begin{align}\label{lambda}
    \lambda:=\epsilon t^{\frac{\alpha}{\kappa}}
\end{align}
where $\epsilon>0$ is chosen to be sufficiently small. With this in mind, we have the following corollary:
\begin{corollary}
    Let $-\frac{1}{2}<s<\frac{1}{2}$ and $2-2s<\kappa<2.5-s$. Then for $0<\delta<\alpha\le1$ with $\delta$ sufficiently small and $\|B_0\|_{\dot{H}^{\sigma_c}}$ sufficiently small, we have for each integer $k>0$
    \begin{align*}
        \|D^kB(t)\|_{\dot{H}^{\sigma_c+\delta}}\leq C_k\|B_0\|_{\dot{H}^{\sigma_c}}t^{-\frac{k+\delta}{\kappa}},
    \end{align*}
    for all $t>0,$ where $\lambda(t)$ is given by (\ref{lambda}).
\end{corollary}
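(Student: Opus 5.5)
The plan is to deduce the corollary from the Gevrey bound in Theorem \ref{theorem for emhd} together with the derivative estimate from the subsection on Gevrey classes. Since $\|B_0\|_{\dot{H}^{\sigma_c}}$ is small, the last assertion of Theorem \ref{theorem for emhd} gives $T=\infty$. Hence for every $t>0$
\begin{align*}
    \|B(t)\|_{G_{\alpha,\sigma_c+\delta}^{\lambda(t)}}\le \|B_0\|_{\dot{H}^{\sigma_c}}\,t^{-\frac{\delta}{\kappa}},
\end{align*}
with $\lambda(t)=\epsilon t^{\alpha/\kappa}$ as in (\ref{lambda}).

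The main step is a single-multiplier version of the derivative estimate, so that we control $\|D^kB\|_{\dot H^{\sigma_c+\delta}}$ directly rather than summing over multi-indices. On the Fourier side, $\widehat{D^kB}$ is bounded pointwise by $|\xi|^k|\hat B(\xi)|$. So it suffices to bound the multiplier
\begin{align*}
    |\xi|^k e^{-\lambda|\xi|^\alpha}\le \sup_{r>0} r^k e^{-\lambda r^\alpha}.
\end{align*}
Setting $y=\lambda r^\alpha$, this equals $\lambda^{-k/\alpha}\sup_{y>0} y^{k/\alpha}e^{-y}=\lambda^{-k/\alpha}\left(\frac{k}{\alpha e}\right)^{k/\alpha}$. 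Alternatively, one can invoke the stated inequality $\|\partial^\beta f\|_{\dot H^\sigma}\le (\beta!/(\lambda\alpha)^{|\beta|})^{1/\alpha}\|f\|_{G^\lambda_{\alpha,\sigma}}$ for each $|\beta|=k$ and sum over the finitely many multi-indices, which only changes the constant. Either way,
\begin{align*}
    \|D^kB(t)\|_{\dot{H}^{\sigma_c+\delta}}\le \tilde C_k\,\lambda(t)^{-k/\alpha}\|B(t)\|_{G_{\alpha,\sigma_c+\delta}^{\lambda(t)}}.
\end{align*}

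Substituting $\lambda(t)^{-k/\alpha}=\epsilon^{-k/\alpha}t^{-k/\kappa}$ and the Gevrey bound gives
\begin{align*}
    \|D^kB(t)\|_{\dot{H}^{\sigma_c+\delta}}\le \tilde C_k\,\epsilon^{-k/\alpha}\,\|B_0\|_{\dot{H}^{\sigma_c}}\,t^{-\frac{k+\delta}{\kappa}}.
\end{align*}
This is the claim with $C_k=\tilde C_k\epsilon^{-k/\alpha}$. Since $\epsilon$ is fixed by the proof of Theorem \ref{theorem for emhd}, $C_k$ depends only on $k,\alpha,\kappa,s$.

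There is no serious obstacle here. The only point needing care is that the specific form $\lambda=\epsilon t^{\alpha/\kappa}$ is exactly what makes the powers combine to $t^{-(k+\delta)/\kappa}$. Because $\epsilon$ does not depend on $t$, the estimate holds uniformly for all $t>0$.
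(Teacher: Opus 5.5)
Your proposal is correct and is essentially the argument the paper intends: the paper gives no separate proof, and the corollary follows from the global Gevrey bound of Theorem \ref{theorem for emhd} (with $T=\infty$ for small data), the Gevrey derivative inequality, and the choice $\lambda(t)=\epsilon t^{\alpha/\kappa}$, which turns $\lambda^{-k/\alpha}$ into $\epsilon^{-k/\alpha}t^{-k/\kappa}$. Your pointwise bound $\sup_{r>0} r^k e^{-\lambda r^\alpha}=\lambda^{-k/\alpha}\left(\frac{k}{\alpha e}\right)^{k/\alpha}$ is simply an explicit verification of that derivative inequality.
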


\section{Proof of Theorem \ref{theorem for emhd}}
\noindent The proof of theorem \ref{theorem for emhd} relies on an approximating sequence determined by the following PDE
\begin{equation}\label{auxillary equation for emhd}
    \begin{cases}
        B_t+\Lambda^{\kappa}B+\nabla\times((\nabla\times\Lambda^{-s} B)\times q)=0\\
        \nabla\cdot B=0
    \end{cases}.
\end{equation}
The existence and uniqueness theorem of (\ref{auxillary equation for emhd}) is the following:
\begin{theorem}\label{auxillary theorem for emhd}
    Let $-\frac{1}{2}<s<\frac{1}{2}$ and $2-2s<\kappa<2.5-s$. Given $B_0\in H^{\sigma_c}$, suppose $q\in L^{\infty}(0,T;H^{1+\epsilon})\cap L^2(0,T;\dot{H}^{\sigma_c+\kappa/2})$. Then for a $T>0$ sufficiently small, (\ref{auxillary equation for emhd}) has a unique solution $B\in C([0,T];H^{\sigma_c})\cap L^2(0,T;\dot{H}^{\sigma_c+\frac{\kappa}{2}})$ that satisfies
    \begin{align*}
        \sup_{0\leq t\leq T}\|B(t)\|_{H^{\sigma_c}}\leq\|B_0\|_{H^{\sigma_c}}\exp{\left(C\|q\|_{L_T^2\dot{H}^{\sigma_c+\kappa/2}}\right)}.
    \end{align*}
\end{theorem}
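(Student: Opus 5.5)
The equation (\ref{auxillary equation for emhd}) is linear in $B$ once $q$ is fixed. The plan is therefore to construct $B$ by a Galerkin (Fourier truncation) scheme, prove a uniform a priori energy estimate at the level of $H^{\sigma_c}$, pass to the limit by compactness, and obtain uniqueness from linearity together with the same estimate.

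\textbf{Approximate solutions.} For each $N$, let $P_N$ be the Fourier projection onto $|\xi|\le N$ and consider
\begin{align*}
\partial_t B^N+\Lambda^{\kappa}B^N+P_N\nabla\times\big((\nabla\times\Lambda^{-s}B^N)\times q\big)=0,\qquad B^N(0)=P_NB_0 .
\end{align*}
This is a linear ODE on the band-limited divergence-free fields in $L^2$, with coefficients in $L^1_t$ because $q\in L^\infty_tH^{1+\epsilon}$. It therefore has a unique global solution, and divergence is preserved.

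\textbf{A priori estimate.} This is the key step and the main obstacle. Apply $\Delta_j$, pair with $B_j$, and multiply by $2^{2\sigma_c j}$. The dissipation contributes $2^{2\sigma_c j}\|\Lambda^{\kappa/2}B_j\|_{L^2}^2$. After integrating the curl by parts, the nonlinear term becomes $\langle\Delta_j((\nabla\times\Lambda^{-s}B)\times q),\nabla\times B_j\rangle$. Decompose this with Bony's formula into three parts:
\begin{enumerate}[label=(\roman*)]
\item low frequencies of $q$ against high frequencies of $B$;
\item high frequencies of $q$ against low frequencies of $B$;
\item the high--high interaction.
\end{enumerate}
In part (i) the worst case is $\Lambda^{-s}$ acting on the high mode, which costs roughly $2-s$ derivatives on $B_j$. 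Extract the commutator $[\Delta_j,q_{\le j-2}\times\nabla\times]\Lambda^{-s}B$ and use Lemma \ref{commutator estimate} with $\lambda=0$. This transfers one derivative onto $q$, costing $\|\Lambda^{1+\frac n2-\epsilon}q\|$ with $n=3$. The remaining principal piece $\langle (\nabla\times\Lambda^{-s}B_j)\times q_{\le j-2},\nabla\times B_j\rangle$ has no exact cancellation when $s\neq0$. Write $\Lambda^{-s}B_j=2^{-sj}\tilde{\Lambda}B_j$ and bound it as well as possible by $2^{(2-s)j}\|\nabla q_{\le j}\|_{L^\infty}\|B_j\|^2$ type terms after symmetrization. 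Here $|s|<\frac12$ and the lower bound $\kappa>2-2s$ are exactly what allow the leftover derivatives to be absorbed by $2^{\kappa j}$.

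Parts (ii) and (iii) are handled by H\"older, Bernstein and Young, placing $q$ in $\dot H^{\sigma_c+\kappa/2}$ and $B$ in $\dot H^{\sigma_c+\kappa/2}$ or $\dot H^{\sigma_c}$. The upper bound $\kappa<2.5-s$ makes $\sigma_c>1$, which keeps the exponent sums positive so the paraproduct sums converge. In every case the target is a bound of the form
\begin{align*}
\frac{d}{dt}\|B\|_{H^{\sigma_c}}^2+\|B\|_{\dot H^{\sigma_c+\kappa/2}}^2\lesssim \|q\|_{\dot H^{\sigma_c+\kappa/2}}\,\|B\|_{\dot H^{\sigma_c+\kappa/2}}\|B\|_{H^{\sigma_c}}+(\text{absorbable terms}),
\end{align*}
with the absorbable terms controlled using the $L^\infty_tH^{1+\epsilon}$ norm of $q$ and a small choice of $T$. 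Young's inequality then gives $\frac{d}{dt}\|B\|^2\lesssim \|q\|_{\dot H^{\sigma_c+\kappa/2}}^2\|B\|^2$ plus a lower-order term, and Gr\"onwall yields the stated exponential bound. The $L^2$ part of the $H^{\sigma_c}$ norm is treated in the same way.

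\textbf{Limit and uniqueness.} The uniform bounds give $B^N$ bounded in $L^\infty_TH^{\sigma_c}\cap L^2_T\dot H^{\sigma_c+\kappa/2}$. The equation gives $\partial_tB^N$ bounded in $L^1_TH^{-M}$ for some large $M$. Aubin--Lions then provides a subsequence converging strongly in $L^2_TH^{\sigma_c-}_{loc}$, which suffices to pass to the limit in the term that is linear in $B$. Continuity in time with values in $H^{\sigma_c}$ follows from the energy equality together with weak continuity.

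For uniqueness, the difference of two solutions solves the same linear equation with zero data. Running the estimate at a lower regularity, for instance $L^2$ or $\dot H^{\sigma_c-1}$, where the same cancellations apply, gives via Gr\"onwall that the difference vanishes.
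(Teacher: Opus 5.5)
\textbf{The gap is in the principal term.} You flag the piece $\langle(\nabla\times\Lambda^{-s}B_j)\times q_{\le j-2},\nabla\times B_j\rangle$ as the crux, and your proposed bound for it, $2^{(2-s)j}\|\nabla q_{\le j}\|_{L^\infty}\|B_j\|_{L^2}^2$, is off by a full derivative. The only control on the low modes of $q$ is $\|\nabla q_{\le j}\|_{L^\infty}\lesssim\sum_{k\le j}2^{(\frac{\kappa}{2}+s-1)k}\|q_k\|_{\dot H^{\sigma_c+\kappa/2}}\lesssim 2^{(\frac{\kappa}{2}+s-1)j}\|q\|_{\dot H^{\sigma_c+\kappa/2}}$. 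So your bound costs $2^{(1+\frac{\kappa}{2})j}$ relative to $2^{2\sigma_cj}\|B_j\|_{L^2}^2$, while the dissipation supplies only $2^{\kappa j}$. The hypotheses permit $\kappa<2$ for every $s\in(0,\frac12)$. In that case the loss sits at high frequencies, so neither a small $T$ nor the $L^\infty_TH^{1+\epsilon}$ norm of $q$ can absorb it. Even for $\kappa>2$ the bound is not scale-invariant, so it cannot yield an estimate in terms of $\|q\|_{L^2_T\dot H^{\sigma_c+\kappa/2}}$ alone. As written, the a priori estimate does not close. Your remark that there is no exact cancellation for $s\neq0$ is only an artifact of testing against $B_j$.

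\textbf{How to repair it.} The paper tests the localized equation against $2^{sj}\Lambda^{-s}B_j$ instead. Since $2^{sj}\|\Lambda^{-s/2}B_j\|_{L^2}^2\simeq\|B_j\|_{L^2}^2$, nothing is lost, and the principal piece becomes $\langle q_{\le j-2}\times\nabla\times\Lambda^{-s}B_j,\nabla\times\Lambda^{-s}B_j\rangle=0$ pointwise (the paper's $I_{22}=0$). If you keep $B_j$, the symmetrization must be done precisely. Take $u=\nabla\times B_j$ and $m(D)$ a smooth multiplier equal to $|\xi|^{-s}$ on the support of $\hat u$. Self-adjointness of $m(D)$ and antisymmetry of the cross product show that the principal piece equals $-\frac12\int\epsilon_{ikl}\,u_k\,[m(D),q_i]u_l\,dx$ (summing over repeated indices, with $q=q_{\le j-2}$). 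The commutator gains $2^{-(1+s)j}\|\nabla q_{\le j}\|_{L^\infty}$, so the piece is $\lesssim 2^{(1-s)j}\|\nabla q_{\le j}\|_{L^\infty}\|B_j\|_{L^2}^2$. After the weight $2^{2\sigma_cj}$ this is $\lesssim\|q\|_{\dot H^{\sigma_c+\kappa/2}}\|B_j\|_{\dot H^{\sigma_c+\kappa/2}}\|B_j\|_{\dot H^{\sigma_c}}$. This is where $\kappa>2-2s$ really enters: it is the condition $\frac{\kappa}{2}+s-1>0$ that makes the low-frequency sum converge. With this and the routine bounds for the other paraproduct pieces, Young and Gr\"onwall give $\exp(C\|q\|_{L^2_T\dot H^{\sigma_c+\kappa/2}}^2)$, as in the paper's own $L^\infty_T\dot H^{\sigma_c}$ computation. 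No smallness of $T$ and no $H^{1+\epsilon}$ terms are needed. The same correction is needed in your uniqueness step, since at the $L^2$ level the principal piece again fails to cancel against $\bar B_j$.

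\textbf{Comparison with the paper's scheme.} The rest of your scheme is sound and genuinely different from the paper's. Your Fourier truncation is self-adjoint and commutes with $\Delta_j$, so the estimates transfer verbatim. The truncated problem is a bounded linear ODE needing only $q\in L^\infty_tL^2$ (by Bernstein). The paper's $\epsilon\Lambda^4$ regularization instead requires a contraction whose existence time must then be made uniform in $\epsilon$. Its contraction estimate also uses $\|\hat q\|_{L^1}$, which $\|q\|_{H^{1+\epsilon}}$ does not control in three dimensions.
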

\noindent Theorem \ref{auxillary theorem for emhd} can be proved by an artificial viscosity argument. A sketch of the proof is provided in Appendix A. With this in hand, we only need a priori estimates for (\ref{auxillary equation for emhd}). We will adapt the strategy in \cite{jolly2021existence} to obtain the relevant estimates.
\subsection{Sobolev space estimates for EMHD}We note that a direct energy estimate yields
\begin{align}\label{L2 conservation}
    \frac{1}{2}\frac{d}{dt}\|B\|_{L^2}^2+\|B\|_{\dot{H}^{\frac{\kappa}{2}}}^2\le0
\end{align}
\subsubsection{Intermediary $L^{\frac{\kappa}{\delta}}_T\dot{H}^{\sigma_c+\delta}$-estimates} To close the estimates in $L^{\infty}\dot{H}^{\sigma_c}$ and $L^2_T\dot{H}^{\sigma_c+\kappa/2}$, we need to establish intermediary estimates. Applying $\Delta_j$ to (\ref{EMHD}) and multiplying both sides of (\ref{EMHD}) by $2^{2(\sigma_c+\kappa)j}2^{sj}\Lambda^{-s}B_j$ yields
\begin{align*}
    &\frac{1}{2}\frac{d}{dt}2^{2(\sigma_c+\delta)j}\|B_j\|_{L^2}^2+2^{(2\sigma_c+2\delta+\kappa) j}\|B_j\|_{L^2}^2\\
    \leq&\sum_{|k-j|\leq2}\int2^{sj}2^{(2\sigma_c+2\delta)j}\Delta_j((\nabla\times\Lambda^{-s} B_{\leq k-2})\times q_k)\cdot\nabla\times \Lambda^{-s}B_jdx\\
    &+\sum_{|k-j|\leq2}\int2^{sj}2^{(2\sigma_c+2\delta)j}\Delta_j((\nabla\times \Lambda^{-s}B_{k})\times q_{\leq k-2})\cdot\nabla\times\Lambda^{-s} B_jdx\\
    &+\sum_{k\geq j-2}\int2^{sj}2^{(2\sigma_c+2\delta)j}\Delta_j((\nabla\times \Lambda^{-s}B_{\tilde{k}})\times q_{k})\cdot\nabla\times \Lambda^{-s}B_jdx\\
    :=&I_1+I_2+I_3
\end{align*}
Using Plancherel's theorem, H\"older's inequality and Young's inequality, we obtain
\begin{align*}
    I_1\lesssim&\sum_{|k-j|\le 2}2^{j}\|\widehat{\nabla\times\Lambda^{-s} B}_{\le k-2}\|_{L^1}2^{(\sigma_c+\delta)j}\|q_k\|_{L^2}2^{(\sigma_c+\delta-1)j}\|\nabla\times B_j\|_{L^2}\\
    \lesssim&\sum_{|k-j|\le 2}2^{(1+\epsilon)j}\|\Lambda^{-s}B\|_{\dot{H}^{2.5-\epsilon}}\|q_k\|_{\dot{H}^{\sigma_c+\delta}}\|B_j\|_{\dot{H}^{\sigma_c+\delta}}\\
    \lesssim&\sum_{|k-j|\le 2}2^{(\kappa-\delta)j}\|B\|_{\dot{H}^{\sigma_c+\delta}}\|q_k\|_{\dot{H}^{\sigma_c+\delta}}\|B_j\|_{\dot{H}^{\sigma_c+\delta}}\\
    :=&c_j2^{(\kappa-\delta)j}\|B\|_{\dot{H}^{\sigma_c+\delta}}\|q\|_{\dot{H}^{\sigma_c+\delta}}\|B_j\|_{\dot{H}^{\sigma_c+\delta}}
\end{align*}
where
\begin{align*}
    c_j=\frac{\sum_{|k-j|\le 2}\|q_k\|_{\dot{H}^{\sigma_c+\delta}}}{\|q\|_{\dot{H}^{\sigma_c+\delta}}}
\end{align*}
Using the commutator, we write
\begin{align*}
    I_2=&\sum_{|k-j|\leq2}2^{sj}2^{(2\sigma_c+2\delta)j}\langle[\Delta_j,q_{\le k-2}\times]\nabla\times \Lambda^{-s}B_k,\nabla\times \Lambda^{-s}B_j\rangle\\
    &+\sum_{|k-j|\leq2}2^{sj}2^{(2\sigma_c+2\delta)j}\langle q_{\le j-2}\times (\nabla\times \Lambda^{-s}B_{k})_j,\nabla\times \Lambda^{-s}B_j\rangle\\
    &+\sum_{|k-j|\leq2}2^{sj}2^{(2\sigma_c+2\delta)j}\langle q_{\le k-2}-q_{\le j-2}\times (\nabla\times \Lambda^{-s}B_{k})_j,\nabla\times \Lambda^{-s}B_j\rangle\\
    :=&I_{21}+I_{22}+I_{23}
\end{align*}
Note that $I_{22}=0$. To estimate $I_{21}$, we use Lemma \ref{commutator estimate}, H\"older's inequatliy, Young's inequality, and Berstein's inequality to obtain
\begin{align*}
    I_{21}
    &\lesssim\sum_{|k-j|\le2}2^{(1-s)j}\|\widehat{\Lambda q_{\le k-2}}\|_{L^1}\|B_k\|_{\dot{H}^{\sigma_c+\delta}}\|B_j\|_{\dot{H}^{\sigma_c+\delta}} \\
    &\lesssim \sum_{|k-j|\le2}2^{(\kappa-\delta)j}\|q\|_{\dot{H}^{\sigma_c+\delta}}\|B_k\|_{\dot{H}^{\sigma_c+\delta}}\|B_j\|_{\dot{H}^{\sigma_c+\delta}}\\
    &\lesssim c_j2^{(\kappa-\delta)j}\|q\|_{\dot{H}^{\sigma_c+\delta}}\|B\|_{\dot{H}^{\sigma_c+\delta}}\|B_j\|_{\dot{H}^{\sigma_c+\delta}}
\end{align*}
where
\begin{align*}
    c_j=\frac{\sum_{|k-j|\le2}\|B_k\|_{\dot{H}^{\sigma_c+\delta}}}{\|B\|_{\dot{H}^{\sigma_c+\delta}}}.
\end{align*}
We use H\"older's inequality and Bernstein's inequality to obtain
\begin{align*}
    I_{23}&\lesssim \sum_{|k-j|\le 2}2^{(\kappa-\delta)j}\|q\|_{\dot{H}^{\sigma_c+\delta}}\|B_{jk}\|_{\dot{H}^{\sigma_c+\delta}}\|B_j\|_{\dot{H}^{\sigma_c+\delta}}\\
    &=c_j\|q\|_{\dot{H}^{\sigma_c+\delta}}\|B\|_{\dot{H}^{\sigma_c+\delta}}\|B_j\|_{\dot{H}^{\sigma_c+\delta}}
\end{align*}
where
\begin{align*}
    c_j=\frac{\sum_{|k-j|\le 2}\|B_{jk}\|_{\dot{H}^{\sigma_c+\delta}}}{\|B\|_{\dot{H}^{\sigma_c+\delta}}}.
\end{align*}
Note that
\begin{align*}
    I_3&\lesssim\sum_{k\ge j-2}\int2^{(2-s)j}2^{(\sigma_c+\delta+s-1)(j-k)}\|q_k\|_{L^{\infty}}2^{(\sigma_c+\delta+s-1)k}\|\nabla \Lambda^{-s}\times B_k\|_{L^2}2^{(\sigma_c+\delta-1)j}\|\nabla\times B_j\|_{L^2}\\
    &\lesssim\sum_{k\ge j-2}\int2^{(2-s)j}2^{(\sigma_c+\delta+s-1)(j-k)}\|\Lambda^{1.5} q_{\tilde{k}}\|_{L^2}\|B_k\|_{\dot{H}^{\sigma_c+\delta}}\|B_j\|_{\dot{H}^{\sigma_c+\delta}}\\
    &\lesssim\sum_{k\ge j-2}\int2^{(2-s)j}2^{(\sigma_c+\delta+s-1)(j-k)}2^{\left(1.5-(\sigma_c+\delta)\right)k}\| q_{\tilde{k}}\|_{\dot{H}^{\sigma_c+\delta}}\|B_k\|_{\dot{H}^{\sigma_c+\delta}}\|B_j\|_{\dot{H}^{\sigma_c+\delta}}\\
    &=\sum_{k\ge j-2}\int2^{(2-s)j}2^{(\sigma_c+\delta+s-1)(j-k)}2^{\left(-2+\kappa+s-\delta)\right)k}\| q_{\tilde{k}}\|_{\dot{H}^{\sigma_c+\delta}}\|B_k\|_{\dot{H}^{\sigma_c+\delta}}\|B_j\|_{\dot{H}^{\sigma_c+\delta}}\\
    &=\sum_{k\ge j-2}2^{(2-s)(j-k)}2^{(\sigma_c+\delta+s+1)(j-k)}2^{\left(\kappa-\delta\right)(k-j)}2^{\left(\kappa-\delta\right)j}\| q_{\tilde{k}}\|_{\dot{H}^{\sigma_c+\delta}}\|B_k\|_{\dot{H}^{\sigma_c+\delta}}\|B_j\|_{\dot{H}^{\sigma_c+\delta}} \\
    &=\sum_{k\ge j-2}2^{(6.5-s-2\kappa+2\delta)(j-k)}2^{\left(\kappa-\delta\right)j}\| q_{\tilde{k}}\|_{\dot{H}^{\sigma_c+\delta}}\|B_k\|_{\dot{H}^{\sigma_c+\delta}}\|B_j\|_{\dot{H}^{\sigma_c+\delta}} \\
    &:=c_j2^{(\kappa-\delta)j}\| q\|_{\dot{H}^{\sigma_c+\delta}}\|B\|_{\dot{H}^{\sigma_c+\delta}}\|B_j\|_{\dot{H}^{\sigma_c+\delta}}
\end{align*}
where
\begin{align*}
    c_j=\frac{\sum_{k\ge j-2}2^{(6.5-s-2\kappa+2\delta){(j-k)}}\| q_{\tilde{k}}\|_{\dot{H}^{\sigma_c+\delta}}\|b_k\|_{\dot{H}^{\sigma_c+\delta}}}{\| q\|_{\dot{H}^{\sigma_c+\delta}}\|B\|_{\dot{H}^{\sigma_c+\delta}}}
\end{align*}
In conclusion, we have
\begin{align*}
    \frac{1}{2}\frac{d}{dt}\|B_j\|_{\dot{H}^{\sigma_c+\delta}}^2+2^{\kappa j}\|B_j\|_{\dot{H}^{\sigma_c+\delta}}^2
    \lesssim c_j2^{(\kappa-\delta)j}\| B\|_{\dot{H}^{\sigma_c+\delta}}\|q\|_{\dot{H}^{\sigma_c+\delta}}\|B_j\|_{\dot{H}^{\sigma_c+\delta}}
\end{align*}
Dividing both sides by $\|B_j\|_{L^2}$ and an application of Gronwall's inequality yield
\begin{align*}
    \|B_j(t)\|_{\dot{H}^{\sigma_c+\delta}}
    \lesssim& e^{-2^{\kappa j}t}\|B_j\|_{\dot{H}^{\sigma_c+\delta}}
    +c_j\int e^{-2^{\kappa j}(t-s)}2^{(\kappa-\delta)j}\|q\|_{\dot{H}^{\sigma_c+\delta}}\|B\|_{\dot{H}^{\sigma_c+\delta}}\\
    \lesssim& e^{-2^{\kappa j}t}2^{\delta j}\|B_j\|_{\dot{H}^{\sigma_c}}
   +c_j\int (t-s)^{-(1-\frac{\delta}{\kappa})}\|q\|_{\dot{H}^{\sigma_c+\delta}}\|B\|_{\dot{H}^{\sigma_c+\delta}}
\end{align*}
Taking $l^2$-norm in $j$ followed by the $L^{\frac{\kappa}{\delta}}$-norm in time, we obtain
\begin{align*}
    \|B\|_{L_T^{\frac{\kappa}{\delta}}\dot{H}^{\sigma_c+\delta}}\lesssim\mathscr{T}_1(T)+\mathscr{T}_2(T)
\end{align*}
where
\begin{align*}
    &\mathscr{T}_1(T)=\left\|\left(\sum_{j}e^{-2^{\kappa j}t}2^{2\delta j}\|B_j\|_{\dot{H}^{\sigma_c}}^2\right)^{\frac{1}{2}} \right\|_{L^{\frac{\delta}{\kappa}}}\\
    &\mathscr{T}_2(T)=\left\|\int(t-s)^{-(1-\frac{\delta}{\kappa})}\|q\|_{\dot{H}^{\sigma_c+\delta}}\|B(s)\|_{\dot{H}^{\sigma_c+\delta}}ds\right\|_{L^{\frac{\delta}{\kappa}}}
\end{align*}
By Minkowski's inequality and the Lebesgue dominated convergence theorem, we obtain
\begin{align*}
    \mathscr{T}_1\le C\|B_0\|_{\dot{H}^{\sigma_c}},\quad\lim_{T\to0}\mathscr{T}_1(T)=0
\end{align*}
By Hardy-Littlewood-Sobolev's inequality followed by H\"older's inequality, we obtain
\begin{align*}
    \mathscr{T}_2\leq& C\left\|\|q(\cdot)\|_{\dot{H}^{\sigma_c+\delta}}\|B(\cdot)\|_{\dot{H}^{\sigma_c+\delta}}\right\|_{L_T^{\frac{\kappa}{2\delta}}}\\
    \leq&C\left\|\|q(\cdot)\|_{\dot{H}^{\sigma_c+\delta}}\right\|_{L_T^{\frac{\kappa}{\delta}}}\left\|\|B(\cdot)\|_{\dot{H}^{\sigma_c+\delta}}\right\|_{L_T^{\frac{\kappa}{\delta}}}\\
    =&C\|q\|_{L_T^{\frac{\kappa}{\delta}}\dot{H}^{\sigma_c+\delta}}\|B\|_{L_T^{\frac{\kappa}{\delta}}\dot{H}^{\sigma_c+\delta}}
\end{align*}
Thus, we obtain
\begin{align*}
    \|B\|_{L_T^{\frac{\kappa}{\delta}}\dot{H}^{\sigma_c+\delta}}\lesssim\|B_0\|_{\dot{H}^{\sigma_c}}+\|q\|_{L_T^{\frac{\kappa}{\delta}}\dot{H}^{\sigma_c+\delta}}\|B\||_{L_T^{\frac{\kappa}{\delta}}\dot{H}^{\sigma_c+\delta}}
\end{align*}
\subsubsection{$L_T^2\dot{H}^{\sigma_c+\frac{\kappa}{2}}$-estimates}Recall that we have
\begin{align*}
    &\frac{1}{2}\frac{d}{dt}2^{2(\sigma_c+\delta)j}\|B_j\|_{L^2}^2+2^{(2\sigma_c+2\delta+\kappa) j}\|B_j\|_{L^2}^2\\
    \leq&\sum_{|k-j|\leq2}\int2^{sj}2^{(2\sigma_c+2\delta)j}\Delta_j((\nabla\times\Lambda^{-s} B_{\leq k-2})\times q_k)\cdot\nabla\times \Lambda^{-s}B_jdx\\
    &+\sum_{|k-j|\leq2}\int2^{sj}2^{(2\sigma_c+2\delta)j}\Delta_j((\nabla\times \Lambda^{-s}B_{k})\times q_{\leq k-2})\cdot\nabla\times \Lambda^{-s}B_jdx\\
    &+\sum_{k\geq j-2}\int2^{sj}2^{(2\sigma_c+2\delta)j}\Delta_j((\nabla\times \Lambda^{-s}B_{\tilde{k}})\times q_{k})\cdot\nabla\times \Lambda^{-s}B_jdx\\
    :=&I_1+I_2+I_3
\end{align*}
Note that
\begin{align*}
    I_3&\lesssim\sum_{k\ge j-2}2^{(2-s)j}2^{(\sigma_c+\delta+s-1)(j-k)}\|\Lambda^{1.5}q_{\tilde{k}}\|_{L^2}\|B_k\|_{\dot{H}^{\sigma_c+\delta}}\|B_j\|_{\dot{H}^{\sigma_c+\delta}}\\
    &\lesssim\sum_{k\ge j-2}2^{(2-s)j}2^{(\sigma_c+\delta+s-1)(j-k)}2^{(1.5-\sigma_c-\kappa/2)k}\|q_{\tilde{k}}\|_{\dot{H}^{\sigma_c+\kappa/2}}\|B_k\|_{\dot{H}^{\sigma_c+\delta}}\|B_j\|_{\dot{H}^{\sigma_c+\delta}}\\
    &=\sum_{k\ge j-2}2^{(2-s)(j-k)}2^{(\sigma_c+\delta+s-1)(j-k)}2^{\frac{\kappa}{2}(k-j)}2^{\frac{\kappa}{2}j}\|q_{\tilde{k}}\|_{\dot{H}^{\sigma_c+\kappa/2}}\|B_k\|_{\dot{H}^{\sigma_c+\delta}}\|B_j\|_{\dot{H}^{\sigma_c+\delta}}\\
    &=\sum_{k\ge j-2}2^{(2-s)(j-k)}2^{(\sigma_c+\delta+s-1)(j-k)}2^{\frac{\kappa}{2}(k-j)}2^{\frac{\kappa}{2}j}\|q_{\tilde{k}}\|_{\dot{H}^{\sigma_c+\kappa/2}}\|B_k\|_{\dot{H}^{\sigma_c+\delta}}\|B_j\|_{\dot{H}^{\sigma_c+\delta}}
\end{align*}
Note that by using Lemma (\ref{commutator estimate}) followed by Young's inequality and H\"older's inequality, $q_{\le k-2}$ can gain $2.5-\epsilon$ give many derivatives. Under the assumption that $\kappa>2-2s$, we have $2.5-\epsilon>\sigma_c+\kappa/2$, so there is no difficult to distribute $\sigma_c+\kappa/2$ many derivatives to the low mode $q_{\le k-2}$. Thus, by a similar estimates as before, we obtain
\begin{align*}
    &\frac{1}{2}\frac{d}{dt}\|B_j\|_{\dot{H}^{\sigma_c+\delta}}^2+2^{\kappa j}\|B_j\|_{\dot{H}^{\sigma_c+\delta}}^2\lesssim c_j2^{\frac{\kappa}{2}j}\| B\|_{\dot{H}^{\sigma_c+\delta}}\|q\|_{\dot{H}^{\sigma_c+\kappa/2}}\|B_j\|_{\dot{H}^{\sigma_c+\delta}}
\end{align*}
Dividing both sides by $2^{(\delta-\frac{\kappa}{2})j}\|B_j\|_{\dot{H}^{\sigma_c+\delta}}$ and an application of Gronwall's inequality yield
\begin{align*}
    &\|B_j(t)\|_{\dot{H}^{\sigma_c+\frac{\kappa}{2}}}
    \lesssim c_j\int e^{-2^{\kappa j}(t-s)}2^{(\kappa-\delta)j}\|q\|_{\dot{H}^{\sigma_c+\delta}}\|B\|_{\dot{H}^{\sigma_c+\frac{\kappa}{2}}}\\
\end{align*}
Taking $l^2$-norm in $j$ followed by the $L^{2}$-norm in time, we obtain
\begin{align*}
    \|B\|_{L_T^{2}\dot{H}^{\sigma_c+\delta}}\lesssim\mathscr{S}_1(T)+\mathscr{S}_2(T)
\end{align*}
where
\begin{align*}
    &\mathscr{S}_1(T)=\left\|\left(\sum_{j}e^{-2^{\kappa j+1}t}2^{\kappa j}\|B_j\|_{\dot{H}^{\sigma_c}}^2\right)^{\frac{1}{2}} \right\|_{L^{2}}\\
    &\mathscr{S}_2(T)=\left\|\int(t-s)^{-(1-\frac{\delta}{\kappa})}\|q\|_{\dot{H}^{\sigma_c+\frac{\kappa}{2}}}\|B(s)\|_{\dot{H}^{\sigma_c+\delta}}ds\right\|_{L^{2}}
\end{align*}
By Minkowski's inequality and the Lebesgue dominated convergence theorem, we obtain
\begin{align*}
    \mathscr{S}_1\le C\|B_0\|_{\dot{H}^{\sigma_c}},\quad\lim_{T\to0}\mathscr{T}_1(T)=0
\end{align*}
By Hardy-Littlewood-Sobolev's inequality followed by H\"older's inequality, we obtain
\begin{align*}
    \mathscr{S}_2\leq& C\left\|\|q(\cdot)\|_{\dot{H}^{\sigma_c+\delta}}\|B(\cdot)\|_{\dot{H}^{\sigma_c+\delta}}\right\|_{L_T^{\frac{2\kappa}{\kappa+2\delta}}}\\
    =&C\|q\|_{L_T^{2}\dot{H}^{\sigma_c+\frac{\kappa}{2}}}\|B\||_{L_T^{\frac{\kappa}{\delta}}\dot{H}^{\sigma_c+\delta}}
\end{align*}
Thus, we obtain
\begin{align*}
    \|B\|_{L_T^2\dot
    H^{\sigma_c+\frac{\kappa}{2}}}\le C\|B_0\|_{\dot{H}^{\sigma_c}}+C\|q\|_{L_T^{2}\dot{H}^{\sigma_c+\frac{\kappa}{2}}}\|B\||_{L_T^{\frac{\kappa}{\delta}}\dot{H}^{\sigma_c+\delta}}
\end{align*}
\subsubsection{$L_T^{\infty}\dot{H}^{\sigma_c}$-estimates} Recall that we have
\begin{align*}
    &\frac{1}{2}\frac{d}{dt}\|B_j\|_{\dot{H}^{\sigma_c+\delta}}^2+2^{\kappa j}\|B_j\|_{\dot{H}^{\sigma_c+\delta}}^2
    \lesssim c_j2^{\frac{\kappa}{2}j}\| B\|_{\dot{H}^{\sigma_c+\delta}}\|q\|_{\dot{H}^{\sigma_c+\kappa/2}}\|B_j\|_{\dot{H}^{\sigma_c+\delta}}
\end{align*}
Setting $\delta=0$ and an application of Young's inequality, we have
\begin{align*}
    \frac{1}{2}\frac{d}{dt}\|B_j\|_{\dot{H}^{\sigma_c}}^2+2^{\kappa j}\|B_j\|_{\dot{H}^{\sigma_c}}^2
    \lesssim&c_j2^{\frac{\kappa}{2}j}\| B\|_{\dot{H}^{\sigma_c}}\|q\|_{\dot{H}^{\sigma_c+\kappa/2}}\|B_j\|_{\dot{H}^{\sigma_c}}\\
    \lesssim&c_jC\| B\|_{\dot{H}^{\sigma_c}}^2\|q\|_{\dot{H}^{\sigma_c+\kappa/2}}^2+c2^{\kappa j}\|B_j\|_{\dot{H}^{\sigma_c}}^2
\end{align*}
Then an application of Gronwall's inequality together with (\ref{L2 conservation}) yields
\begin{align}
    \|B\|_{L_T^{\infty}H^{\sigma_c}}\leq\|B_0\|_{H^{\sigma_c}}\exp{(C\|q\|_{L_T^2H^{\sigma_c+\frac{\kappa}{2}}}^2)}
\end{align}

\subsubsection{Summary of Sobolev space estimates for EMHD} In summary, we obtain
\begin{align}\label{sobolev estimates for emhd}
    &\nonumber\|B\|_{L_T^{\frac{\kappa}{\delta}}H^{\sigma_c+\delta}}\leq C\|B_0\|_{H^{\sigma_c}}+C\|q\|_{L_T^{\frac{\kappa}{\delta}}H^{\sigma_c+\delta}}\|B\|_{L_T^{\frac{\kappa}{\delta}}H^{\sigma_c+\delta}}\\
    &\nonumber\|B\|_{L_T^2H^{\sigma_c+\frac{\kappa}{2}}}\le C\|B_0\|_{H^{\sigma_c}}+C\|q\|_{L_T^2H^{\sigma_c+\frac{\kappa}{2}}}\|B\|_{L_T^{\frac{\kappa}{\delta}}H^{\sigma_c+\delta}}\\
    &\|B\|_{L_T^{\infty}H^{\sigma_c}}\leq\|B_0\|_{H^{\sigma_c}}\exp{\left(C\|q\|_{L_T^2H^{\sigma_c+\frac{\kappa}{2}}}^2 \right)}
\end{align}
\subsection{Gevrey class estimates for EMHD} In this section, we obtain an a priori estimate in the Gevrey class. We define
\begin{align*}
    \|B\|_{X_T}:=\esssup_{0<t\leq T}t^{\frac{\delta}{\kappa}}\|B(t)\|_{G_{\alpha,\sigma_c+\delta}^{\lambda(t)}}.
\end{align*}
Note that
\begin{align}\label{calculation for gevrey estimate}
    \partial_t(G_{\alpha}^{\lambda(t)}B)=\lambda'(t)G_{\alpha}^{\lambda(t)}\Lambda^{\alpha}B+G_{\alpha}^{\lambda(t)}\partial_tB
\end{align}
Applying the operator $G_{\alpha}^{\lambda(t)}\Delta_j$ to (\ref{EMHD}), multiplying $2^{2(\sigma_c+\delta)j}\tilde{B}_j$ on both sides of (\ref{EMHD}) and using (\ref{calculation for gevrey estimate}), we obtain
\begin{align*}
    &\frac{1}{2}\frac{d}{dt}2^{2(\sigma_c+\delta)j}\|\tilde{B}_j\|_{L^2}^2+2^{2(\sigma_c+\delta)j}\|\Lambda^{\kappa/2}\tilde{B}_j\|_{L^2}^2\\
    =&\lambda'(t)2^{2(\sigma_c+\delta)j}\|\Lambda^{\alpha/2}\tilde{B}_j\|_{L^2}^2+2^{2(\sigma_c+\delta)j}2^{sj}\left\langle G_{\alpha}^{\lambda(t)}\Delta_j(q\times(\nabla\times \Lambda^{-s}B)),\nabla\times \Lambda^{-s}\tilde{B}_j\right\rangle
\end{align*}
Proceeding with the same estimates as before, together with Lemma \ref{commutator estimate}, we conclude that
\begin{align}\label{gevrey estimate}
    &\frac{1}{2}\frac{d}{dt}\|\tilde{B}_j\|_{\dot{H}^{\sigma_c+\delta}}^2+\|\Lambda^{\kappa/2}\tilde{B}_j\|_{\dot{H}^{\sigma_c+\delta}}^2\nonumber\\
   \lesssim&\lambda'(t)2^{\alpha j}\|\tilde{B}_j\|_{\dot{H}^{\sigma_c+\delta}}\|\tilde{B}_j\|_{\dot{H}^{\sigma_c+\delta}}\nonumber\\
    &+2^{(\kappa-\delta)j}\|\tilde{q}\|_{H^{\sigma_c+\delta}}\|\tilde{B}_j\|_{\dot{H}^{\sigma_c+\delta}}\|\tilde{B}_j\|_{\dot{H}^{\sigma_c+\delta}}\nonumber\\
    &+2^{(\kappa-\delta)j}\|\tilde{B}\|_{H^{\sigma_c+\delta}}\|\tilde{q}_j\|_{\dot{H}^{\sigma_c+\delta}}\|\tilde{B}_j\|_{\dot{H}^{\sigma_c+\delta}}\nonumber\\
    &+c_j\lambda(t)2^{(\kappa-\delta/2)j}\|E_{\alpha}^{\lambda}\tilde{q}_{\le j}\|_{H^{\sigma_c+\delta/2+\alpha}}\|\tilde{B}\|_{\dot{H}^{\sigma_c+\delta}}\|\tilde{B}\|_{\dot{H}^{\sigma_c+\delta}}\nonumber\\
    &+c_j\lambda(t)2^{(\kappa-\delta/2)j}\|\tilde{q}\|_{H^{\sigma_c+\delta}}\|E_{\alpha}^{\lambda}\tilde{B}_{\le j}\|_{\dot{H}^{\sigma_c+\delta/2+\alpha}}\|\tilde{B}\|_{\dot{H}^{\sigma_c+\delta}}
\end{align}
Choosing $\lambda(t):=\epsilon t^{\frac{\alpha}{\kappa}}$, dividing both sides by $2^{(\sigma_c+\delta)j}\|\tilde{B}_j\|_{L^2}$, and integrating in time, we obtain
\begin{align*}
    \|\Lambda^{\sigma_c+\delta}\tilde{B}_j(t)\|_{L^2}\leq&Ce^{-c'2^{\kappa j}t}2^{\delta j}\|\Delta_jB(0)\|_{H^{\sigma_c}}\\
    &+Cc_j\epsilon\int_0^t2^{\alpha j}e^{-c'2^{\kappa j}(t-s)}s^{\alpha/\kappa-1}\|B(s)\|_{H^{\sigma_c+\delta}}ds\\
    &+Cc_j\int_0^t2^{(\kappa-\delta)j}e^{-c'2^{\kappa j}(t-s)}\|\tilde{B}(s)\|_{H^{\sigma_c+\delta}}\|\tilde{q}(s)\|_{H^{\sigma_c+\delta}}ds\\
    &+Cc_j\int_0^t\epsilon e^{-c'2^{\kappa j}(t-s)}s^{\frac{\alpha}{\kappa}}2^{(\kappa-\delta/2)j}\|E_{\alpha}^{\lambda}\tilde{q}_{\le j}(s)\|_{H^{\sigma_c+\delta/2+\alpha}}\|\tilde{B}(s)\|_{H^{\sigma_c+\delta}}\\
    &+Cc_j\int_0^t\epsilon e^{-c'2^{\kappa j}(t-s)}s^{\frac{\alpha}{\kappa}}2^{(\kappa-\delta/2)j}\|\tilde{q}(s)\|_{H^{\sigma_c+\delta}}\|E_{\alpha}^{\lambda}\tilde{B}_{\le j}(s)\|_{H^{\sigma_c+\delta/2+\alpha}}\\
    &:=Ce^{-c'2^{\kappa j}t}2^{\delta j}\|\Delta_jB(0)\|_{H^{\sigma_c}}+c_j\epsilon K_1+c_jK_2+c_jK_3+c_jK_4+
\end{align*}
By the definition of the norm $\|\cdot\|_{X_T}$, choosing $\alpha>\delta$, we obtain
\begin{align*}
    K_1\lesssim&\int_0^t(t-s)^{-\frac{\alpha}{\kappa}}s^{\frac{\alpha}{\kappa}-1}\|\tilde{B}(s)\|_{\dot{H}^{\sigma_c+\delta}}ds\\
    =&\int_0^1t^{1-\frac{\alpha}{\kappa}}(1-s)^{-\frac{\alpha}{\kappa}}t^{\frac{\alpha}{\kappa}-1}s^{\frac{\alpha}{\kappa}-1}t^{-\frac{\delta}{\kappa}}s^{-\frac{\delta}{\kappa}}(ts)^{\frac{\delta}{\kappa}}\|B(ts)\|_{\dot{H}^{\sigma_c+\delta}}ds \\
    \lesssim&t^{-\frac{\delta}{\kappa}}\int_0^1s^{-1+\frac{\alpha-\delta}{\kappa}}(1-s)^{-\frac{\alpha}{\kappa}}ds\|B\|_{X_T}\\
    \lesssim&t^{-\frac{\delta}{\kappa}}\|B\|_{X_T}
\end{align*}
\begin{align*}
    K_2\lesssim&\int_0^t(t-s)^{\frac{\delta}{\kappa}-1}\|\tilde{q}(s)\|_{\dot{H}^{\sigma_c+\delta}}\|\tilde{B}(s)\|_{\dot{H}^{\sigma_c+\delta}}ds\\
    =&\int_0^1t^{\frac{\delta}{\kappa}}(1-s)^{\frac{\delta}{\kappa}-1}s^{-\frac{2\delta}{\kappa}}t^{-\frac{2\delta}{\kappa}}(ts)^{\frac{2\delta}{\kappa}}\|\tilde{q}(ts)\|_{\dot{H}^{\sigma_c+\delta}}\|\tilde{B}(ts)\|_{\dot{H}^{\sigma_c+\delta}}ds\\
    \lesssim&t^{-\frac{\delta}{\kappa}}\int_0^1(1-s)^{\frac{\delta}{\kappa}-1}s^{-\frac{2\delta}{\kappa}}ds\|q\|_{X_T}\|B\|_{X_T}\\
    \lesssim&t^{-\frac{\delta}{\kappa}}\|q\|_{X_T}\|B\|_{X_T}
\end{align*}
For $K_3$ and $K_4$, we Note that
\begin{align*}
    &\left\|E_{\alpha}^{\lambda(t)}\right\|_{\dot{H}^{\sigma_c+\delta/2+\alpha}}^2\\
    =&\int|\xi|^{2(\sigma_c+\delta)}\left(\int_0^1|\xi|^{\alpha-\delta/2}e^{\lambda(t)(\tau^{\alpha}-1)|\xi|^{\alpha}}d\tau \right)^2\left|\widehat{G_{\alpha}^{\lambda }f}(\xi)\right|^2d\xi\\
    \lesssim&\lambda(t)^{-2\left(1-\frac{\delta}{2\alpha}\right)}\int|\xi|^{2(\sigma_c+\delta)}\left(\int_0^1\frac{1}{(1-\tau^{\alpha})^{1-\frac{\delta}{2\alpha}}}d\tau\right)^2\left|\widehat{G_{\alpha}^{\lambda }f}(\xi)\right|^2d\xi\\
    \lesssim&t^{-2(\frac{\alpha}{\kappa}-\frac{\delta}{2\kappa})}\left\|G_{\alpha}^{\lambda}f \right\|_{H^{\sigma_c+\delta}}^2
\end{align*}
Using the above estimates for $K_3$ and $K_4$, then by the same estimates as those for $K_1$ and $K_2$, we obtain
\begin{align*}
    K_3,K_4
    \lesssim t^{-\frac{\delta}{\kappa}}\|q\|_{X_T}\|B\|_{X_T}.
\end{align*}
Summarizing the above estimates, we conclude that
\begin{align}\label{gevery inequality}
    \|\Lambda^{\sigma_c+\delta}\tilde{B}_j\|_{L^2}\leq Ce^{-c'2^{\kappa j}t}2^{\delta j}\|\Delta_j B_0\|_{\dot{H}^{\sigma_c}}+Cc_j\epsilon t^{-\frac{\delta}{\kappa}}\|B\|_{X_T}+Cc_jt^{-\frac{\delta}{\kappa}}\|q\|_{X_T}\|B\|_{X_T}.
\end{align}
Multiplying both sides of (\ref{gevery inequality}) by $t^{\frac{\delta}{\kappa}}$, taking the $l^2$-norm on $j$ and taking the supremum over $0<t\leq T$, we obtain
\begin{align*}
    \|B\|_{X_T}\leq C\mathcal{I}_T(B_0)+C\epsilon\|B\|_{X_T}+C\|q\|_{X_T}\|B\|_{X_T},
\end{align*}
where
\begin{align*}
    \mathcal{I}_T(B_0):=&\sup_{0<t\leq T}\left(\sum_{j\in\mathbb{Z}}t^{\frac{2\delta}{\kappa}}2^{2\delta j}e^{-c'2^{\kappa j+1}t}\|B_j(0)\|_{\dot{H}^{\sigma_c}}^2 \right)^{\frac{1}{2}}\\
    \leq&C\left(\sum_{j\in\mathbb{Z}}\|B_j(0)\|_{\dot{H}^{\sigma_c}}^2 \right)^{\frac{1}{2}}\leq C\|B_0\|_{\dot{H}^{\sigma_c}}.
\end{align*}
By the dominant convergence theorem, we have $$\lim_{T\to0}\mathcal{I}_T(B_0)=0.$$
Taking $\epsilon$ small enough so that
$$C\epsilon\leq\frac{1}{2},$$\label{small calI}
we obtain
\begin{align}\label{Gevery bootstrap inequaltiy}
    \|B\|_{X_T}\leq C\mathcal{I}_T(B_0)+C\|q\|_{X_T}\|B\|_{X_T}
\end{align}
\subsection{Properties of the solution to EMHD}
\noindent To show the existence, we adapt the following approximating equations:
\begin{equation}
    \begin{cases}
        \partial_tB^0+\Lambda^{\kappa}B^0=0\\
        B^0(x,0)=B_0(x)
    \end{cases}
\end{equation}
and
\begin{equation}\label{approximating system for emhd}
    \begin{cases}
        \partial_tB^{n+1}+\Lambda^{\kappa}B^{n+1}+\nabla\times((\nabla\times \Lambda^{-s}B^{n+1})\times B^n)=0\\
        B^{n+1}(0,x)=B_0(x)
    \end{cases}
\end{equation}
where $n\in\mathbb{Z}_{\{\geq0\}}$.
\subsubsection{Existence} First, we establish uniform estimates for $B^{n+1}$. Invoking a priori estimates, we conclude that there exists a bounded function $\mathscr{R}_{12}:=\mathscr{T}_1+\mathscr{T}_2$ such that
\begin{align*}
    \mathscr{R}_{12}\leq C\|B_0\|_{\dot{H}^{\sigma_c}},\quad\lim_{T\to0}\mathscr{R}_{12}(T)=0
\end{align*}
and\newline
\begin{align*}
    &\|B^0\|_{L_T^2\dot{H}^{\sigma_c+\frac{\kappa}{2}}\cap L_T^{\frac{\kappa}{\delta}}\dot{H}^{\sigma_c+\delta}}\leq\mathscr{R}_{12}(T)\\
    &\|B^{n+1}\|_{L_T^2\dot{H}^{\sigma_c+\frac{\kappa}{2}}\cap L_T^{\frac{\kappa}{\delta}}\dot{H}^{\sigma_c+\delta}}\leq\mathscr{R}_{12}(T)+C_1\|B^n\|_{L_T^2\dot{H}^{\sigma_c+\frac{\kappa}{2}}\cap L_T^{\frac{\kappa}{\delta}}\dot{H}^{\sigma_c+\delta}}\|B^{n+1}\|_{L_T^2\dot{H}^{\sigma_c+\frac{\kappa}{2}}\cap L_T^{\frac{\kappa}{\delta}}\dot{H}^{\sigma_c+\delta}}.
\end{align*}\newline
Let $\|B_0\|_{H^{\sigma_c}}$ and $T_0$ be chosen small enough such that $\mathscr{R}_{12}\leq1/(4C_1)$. Note that if $\|B_0\|_{H^{\sigma_c}}$ is small enough, $T_0$ can be chosen to be $\infty$. We obtain
\begin{align*}
    \|B^n\|_{L_T^2\dot{H}^{\sigma_c+\frac{\kappa}{2}}\cap L_T^{\frac{\kappa}{\delta}}\dot{H}^{\sigma_c+\delta}}\leq2\mathscr{R}_{12}(T),\quad n=0,1,2,\dots
\end{align*}
Applying the existence theorem of the auxiliary equations, we obtain a unique solution of (\ref{approximating system for emhd}) satisfying, for $T\in(0,T_0)$
\begin{align}
    &\|B^{n+1}\|_{L_T^2H^{\sigma_c+\frac{\kappa}{2}}\cap L_T^{\frac{\kappa}{\delta}}H^{\sigma_c+\delta}}\leq2\mathscr{R}_{12}(T),\label{converge1}\\
    &\|B^{n+1}\|_{L_T^{\infty}H^{\sigma_c}}\leq\|B_0\|_{H^{\sigma_c}}\exp{(C_2\mathscr{R}_{12}(T)^2)}\label{forlimit}
\end{align}
Next, we show that the sequence of solutions $\{B^n\}_{n\geq0}$ converges to a solution of (\ref{EMHD}). Denoting $\bar{B}^{n+1}=B^{n+1}-B^n$ and $\bar{B}_0=B_0$, we see that $\bar{B}^{n+1}$ satisfies the following equation:
\begin{equation}
    \begin{cases}
        \partial_t\bar{B}^{n+1}+\Lambda^{\kappa}\bar{B}^{n+1}+\nabla\times((\nabla\times \Lambda^{-s}\bar{B}^{n+1})\times B^n)+\nabla\times((\nabla\times \Lambda^{-s}B^n)\times\bar{B}^n)=0\\
        \bar{B}^{n+1}(0,x)=0
    \end{cases}
\end{equation}
A direct energy estimate yields
\begin{align*}
    \frac{1}{2}\frac{d}{dt}\|\Delta_j\bar{B}^{n+1}\|_{L^2}^2+\|\Lambda_j^{\frac{\kappa}{2}}\bar{B}^{n+1}\|_{L^2}^2=&2^{sj}\langle\Delta_j(B^n\times(\nabla\times\Lambda^{-s}\bar{B}^{n+1})),\Delta_j\nabla\times\Lambda^{-s}\bar{B}^{n+1}\rangle\\
    &+2^{sj}\langle\Delta_j(\bar{B}^n\times(\nabla\times\Lambda^{-s} B^n)),\Delta_j\nabla\times\Lambda^{-s}\bar{B}^{n+1}\rangle\\
    :=&J_1+J_2.
\end{align*}
By Bony's paraproduct, we obtain
\begin{align*}
    J_1=&\sum_{|k-j|\leq2}\int2^{sj}\Delta_j((\nabla\times\Lambda^{-s}\bar{B}^{n+1}_{\leq k-2})\times B_k^n)\cdot\nabla\times\Lambda^{-s}\bar{B}_j^{n+1}dx\\
    &+\sum_{|k-j|\leq2}\int2^{sj}\Delta_j((\nabla\times\Lambda^{-s}\bar{B}^{n+1}_k)\times B^n_{\leq k-2})\cdot\nabla\times\Lambda^{-s}\bar{B}_j^{n+1}dx\\
    &+\sum_{k\geq j-2}\int2^{sj}\Delta_j((\nabla\times\Lambda^{-s}\bar{B}^{n+1}_k)\times B_{\tilde{k}}^n)\cdot\nabla\times\Lambda^{-s}\bar{B}_j^{n+1}dx\\
    :=&J_{11}+J_{12}+J_{13}
\end{align*}
Using H\"older's inequality and Bernstein's inequality, we obtain
\begin{align*}
    J_{11}\lesssim2^{-\frac{\kappa}{6}j}\|B_j^n\|_{\dot{H}^{\sigma_c+\kappa/2}}\|\bar{B}^{n+1}\|_{\dot{H}^{\frac{2\kappa}{3}}}\|\bar{B}_j^{n+1}\|_{L^2}
\end{align*}
$J_{12}$ can be decomposed as follows:
\begin{align*}
    J_{12}=&\sum_{|k-j|\leq2}\int2^{sj}[\Delta_j,B_{\leq k-2}^{n}\times\nabla\times]\Lambda^{-s}\bar{B}_k^{n+1}\cdot\nabla\times\Lambda^{-s}\bar{B}_j^{n+1}dx\\
    &+\sum_{|k-j|\leq2}\int 2^{sj}B_{\leq j-2}^n\times\nabla\times(\Lambda^{-s}\bar{B}_k^{n+1})_j\cdot\nabla\times\Lambda^{-s}\bar{B}_j^{n+1}dx\\
    &+\sum_{|k-j|\leq2}\int2^{sj}(B_{\leq k-2}^n-B_{\leq j-2}^n)\times\nabla\times(\Lambda^{-s}\bar{B}_k^{n+1})_j\cdot\nabla\times\Lambda^{-s}\bar{B}_j^{n+1}\\
    :=&J_{121}+J_{122}+J_{123}
\end{align*}
Note that $J_{122}=0$. Using Lemma \ref{commutator estimate}, H\"older's inequality and Bernstein's inequality, we obtain
\begin{align*}
    J_{121},J_{123}\lesssim2^{-\frac{\kappa}{6}j}\|B^n\|_{\dot{H}^{\sigma_c+\kappa/2}}\|\bar{B}^{n+1}_j\|_{\dot{H}^{\frac{2\kappa}{3}}}\|\bar{B}_j^{n+1}\|_{L^2}
\end{align*}
Note that
\begin{align*}
    J_{13}=&\sum_{k\geq j-2}\int2^{sj}[\Delta_j,B_{\tilde{k}}^n\times\nabla\times]\Lambda^{-s}\bar{B}_{k}^{n+1}\cdot\nabla\times\Lambda^{-s}\bar{B}_j^{n+1}dx\\
    &+\sum_{j-2\leq k\leq j+1}\int2^{s j} B_{\tilde{k}}^n\times\nabla\times(\Lambda^{-s}\bar{B}_{k}^{n+1})_j\cdot\nabla\times\Lambda^{-s}\bar{B}_j^{n+1}dx\\
    :=&J_{131}+J_{132}
\end{align*}
By H\"older's inequality and Bernstein's inequality, we obtain
\begin{align*}
    J_{13}
    &\lesssim\sum_{k\ge j-2}2^{(1+\frac{\kappa}{6})(j-k)}\|B^n_{\tilde{k}}\|_{\dot{H}^{\sigma_c+\kappa/2}}\|\bar{B}_{k}^{n+1}\|_{\dot{H}^{\frac{2\kappa}{3}}}\|\bar{B}_j^{n+1\|_{L^2}} \\
    &:=c_j2^{-\frac{\kappa}{6}j}\|B^n\|_{\dot{H}^{\sigma_c+\kappa/2}}\|\bar{B}^{n+1}\|_{\dot{H}^{\frac{2\kappa}{3}}}\|\bar{B}_j^{n+1}\|_{L^2}
\end{align*}
By Bony's paraproduct, we obtain
\begin{align}
    J_2=&\sum_{|k-j|\leq2}\int2^{sj}\Delta_j((\bar{B}^n_{\leq k-2}\times(\nabla\times\Lambda^{-s} B^n_k))\cdot\nabla\Lambda^{-s}\times\bar{B}_j^{n+1}dx\\
    &+\sum_{|k-j|\leq2}\int2^{sj}\Delta_j((\bar{B}^n_{k}\times(\nabla\times\Lambda^{-s} B^n_{\leq k-2}))\cdot\nabla\times\Lambda^{-s}\bar{B}_j^{n+1}dx\\
    &+\sum_{k\geq j-2}\int2^{sj}\Delta_j((\bar{B}_{\tilde{k}}^n\times(\nabla\times\Lambda^{-s} B_k^n))\cdot\nabla\times\Lambda^{-s}\bar{B}_j^{n+1}dx\\
    =&J_{21}+J_{22}+J_{23}
\end{align}
By H\"older's inequality, Sobolev embedding and Bernstein's inequality, we obtain
\begin{align*}
    &J_{21}\lesssim c_j2^{(\frac{\kappa}{3}-\delta)j}\|\bar{B}^n\|_{\dot{H}^{\frac{2\kappa}{3}}}\|B^n\|_{\dot{H}^{\sigma_c+\delta}}\|\bar{B}_j^{n+1}\|_{L^2}\\
    &J_{22}\lesssim c_j2^{(\frac{\kappa}{3}-\delta)j}\|\bar{B}_j^n\|_{\dot{H}^{\frac{2\kappa}{3}}}\|B^n\|_{\dot{H}^{\sigma_c+\delta}}\|\bar{B}_j^{n+1}\|_{L^2}
\end{align*}
Using H\"older's inequality and Bernstain's inequality, we obtain
\begin{align*}
    J_{23}
    &\lesssim\sum_{k\ge j-2}2^{(\frac{\kappa}{3}-\delta)j}2^{(1+\delta-\frac{\kappa}{3})(j-k)}\|\bar{B}_{\tilde{k}^n}\|_{\dot{H}^{\frac{2\kappa}{3}}}\|B_k^n\|_{\dot{H}^{\sigma_c+\delta}}\|\bar{B}_j^{n+1}\|_{L^2} \\
    &:= c_j2^{(\frac{\kappa}{3}-\delta)j}\|\bar{B}^n\|_{H^{\frac{2\kappa}{3}}}\|B^n\|_{H^{\sigma_c+\delta}}\|\bar{B}_j^{n+1}\|_{L^2}
\end{align*}
Summarizing the above estimates and dividing both sides by $\|\bar{B}_j^{n+1}\|_{L^2}$ we obtain
\begin{align}\label{energy inequality for B}
    \frac{d}{dt}\|\bar{B}^{n+1}_j\|_{L^2}+c2^{\kappa j}\|\bar{B}^{n+1}\|_{L^2}\leq&c_j2^{-\frac{\kappa}{6}j}\|B^n\|_{\dot{H}^{\sigma_c+\frac{\kappa}{2}}}\|\bar{B}^{n+1}\|_{L^2}\nonumber\\
    &+c_j2^{(\frac{\kappa}{3}-\delta)j}\|B^n\|_{\dot{H}^{\sigma
    _c+\delta}}\|\bar{B}^n\|_{\dot{H}^{\frac{2\kappa}{3}}}.
\end{align}
Multiplying both sides of (\ref{energy inequality for B}) by $2^{\frac{2\kappa}{3}j}$ and integrating in time, we obtain
\begin{align*}
    \|\Lambda^{\frac{2\kappa}{3}}\bar{B}_j^{n+1}\|_{L^2}\leq&\int_0^tc_j2^{\frac{\kappa}{2}j}e^{-c2^{\kappa j}(t-s)}\|B^n(s)\|_{\dot{H}^{\sigma_c+\frac{\kappa}{2}}}\|\bar{B}^{n+1}(s)\|_{\dot{H}^{\frac{2\kappa}{3}}}ds\\
    &+\int_0^tc_j2^{(\kappa-\delta)j}e^{-c2^{\kappa j}(t-s)}\|B^n(s)\|_{\dot{H}^{\sigma_c+\delta}}\|\bar{B}^n(s)\|_{\dot{H}^{\frac{2\kappa}{3}}}ds
\end{align*}
Taking $l^2$-norm in $j$ and then $L^3$-norm in time, we obtain
\begin{align*}
    \|\bar{B}^{n+1}\|_{L^3_T\dot{H}^{\frac{2\kappa}{3}}}\leq C(I+II),
\end{align*}
where
\begin{align*}
    I(T)=&\|\int_0^t(t-s)^{-\frac{1}{2}}\|B^n(s)\|_{\dot{H}^{\sigma_c+\frac{\kappa}{2}}}\|\bar{B}^{n+1}(s)\|_{\dot{H}^{\frac{2\kappa}{3}}}ds\|_{L_T^3}\\
    II(T)=&\|\int_0^t(t-s)^{-(1-\frac{\delta}{\kappa})}\|B^n(s)\|_{\dot{H}^{\sigma_c+\delta}}\|\bar{B}^n(s)\|_{\dot{H}^{\frac{2\kappa}{3}}}ds\|_{L_T^3}
\end{align*}
Applying Hardy-Littlewood-Sobolev inequality followed by H\"older's inequality yields
\begin{align*}
    I(T)\leq&C\|B^n\|_{L_T^2\dot{H}^{\sigma_c+\frac{\kappa}{2}}}\|\bar{B}^{n+1}\|_{L_T^3\dot{H}^{\frac{2\kappa}{3}}},\\
    II(T)\leq&C\|B^n\|_{L_T^{\frac{\kappa}{\delta}}\dot{H}^{\sigma_c+\delta}}\|\bar{B}^n\|_{L_T^3\dot{H}^{\frac{2\kappa}{3}}}.
\end{align*}
So we have
\begin{align*}
    \|\bar{B}^{n+1}\|_{L_T^3\dot{H}^{\frac{2\kappa}{3}}}\leq C\|B^n\|_{L_T^2\dot{H}^{\sigma_c+\frac{\kappa}{2}}\cap L_T^{\frac{\kappa}{\delta}}\dot{H}^{\sigma_c+\delta}}\left(\|\bar{B}^{n+1}\|_{L_T^3\dot{H}^{\frac{2\kappa}{3}}}+\|\bar{B}^n\|_{L_T^3\dot{H}^{\frac{2\kappa}{3}}}\right).
\end{align*}
Let $T_0$ additionally satisfy $\mathscr{T}(T)\leq1/(6C)$. Then it implies
\begin{align}\label{converge2}
    \|\bar{B}^{n+1}\|_{L_{T_0}^3\dot{H}^{\frac{2\kappa}{3}}}\leq\frac{1}{2}\|\bar{B}^n\|_{L_T^3\dot{H}^{\frac{2\kappa}{3}}}.
\end{align}
From (\ref{converge1}), (\ref{converge2}), Sobolev interpolation, Sobolev embedding, and Kato-Ponce inequality, we conclude that there exists a function $B(t,x)$ satisfying
\begin{align*}
    &B\in L_{T_0}^{\infty}H^{\sigma_c}\cap L_{T_0}^2\dot{H}^{\sigma_c+\frac{\kappa}{2}},\\
    &B^n\overset{w^{\ast}}{\rightharpoonup}B\quad\text{in}\quad L_{T_0}^{\infty}H^{\sigma_c},\\
    &B^n\xrightarrow[]{}B\quad\text{in}\quad L_{T_0}^3\dot{H}^{\sigma},\quad \forall\sigma\in[\frac{2\kappa}{3},\sigma_c+\kappa/3].\\
    &\nabla\times((\nabla\times \Lambda^{-s}B^{n+1})\times B^n)\xrightarrow[]{}\nabla\times((\nabla\times\Lambda^{-s} B)\times B)\quad\text{in}\quad L_{T_0}^{1.5}L^2
\end{align*}

\subsubsection{Continuity in time}
Let $B$ be a solution of (\ref{EMHD}) obtained above. We have observed that
\begin{align*}
    B\in L^2(0,T^*;H^{3.5-s-\frac{\kappa}{2}})
\end{align*}
for any $T^*<T_0$. Since
\begin{align*}
    \partial_tB=-\Lambda^{\kappa}B-\nabla\times((\nabla\times\Lambda^{-s} B)\times B)
\end{align*}
By Kato-Ponce and Sobolev embedding, we can observe that
\begin{align*}
    \partial_tB\in L^1(0,T^*;L^2).
\end{align*}
So we obtain
\begin{align*}
    B\in C([0,T^*);L^2).
\end{align*}
Applying Lemma 1.4 in \cite{temam2024navier} (page 263), we obtain
\begin{align}\label{weak continuity}
    B\in C_w([0,T^*);H^{\sigma_c})
\end{align}
where we used the fact that $\kappa+s<3.5$. Using (\ref{forlimit}) and (\ref{weak continuity}), we have
\begin{align*}
    \limsup_{t\to0+}\|B(t)-B_0\|_{H^{\sigma_c}}^2=\limsup_{t\to0+}\left\{\|B(t)\|_{H^{\sigma_c}}^2+\|B_0\|_{H^{\sigma_c}}^2-2\langle B(t),B_0\rangle_{H^{\sigma_c}} \right\}\\
    \leq\limsup_{t\to0}\left\{\|B_0\|_{H^{\sigma_c}}\exp{(C_2\mathscr{R}_{12}(t)^2)}+\|B_0\|_{H^{\sigma_c}}^2-2\langle B(t),B_0\rangle_{H^{\sigma_c}} \right\}=0.
\end{align*}
This establishes the right continuity of $B$ at $t=0$. By a standard bootstrap argument, we obtain
\begin{align*}
    B\in C([0,T_0);H^{\sigma_c}).
\end{align*}
\subsubsection{Uniqueness} Let $B^{(1)}$ and $B^{(2)}$ be two solutions of (\ref{EMHD}) and denote by $\bar{B}:=B^{(1)}-B^{(2)}$. Then $\bar{B}$ satisfies the following evolution equation:
\begin{align*}
    \partial_t\bar{B}+\Lambda^{\kappa}\bar{B}+\nabla\times((\nabla\times\Lambda^{-s}\bar{B})\times B^{(1)})+\nabla\times((\nabla\times \Lambda^{-s}B^{(2)})\times\bar{B})=0
\end{align*}
Using the Littlewood-Paley analysis as before, it is not difficult to show that
\begin{align*}
    \partial_t\|\bar{B}\|_{L^2}^2+\|\Lambda
    ^{\frac{\kappa}{2}}\bar{B}\|_{L^2}^2\le&C\|\Lambda^{\sigma_c+\frac{\kappa}{2}}B^{(2)}\|_{L^2}^2\|\bar{B}\|_{L^2}^2+c\|\Lambda^{\frac{\kappa}{2}}\bar{B}\|_{L^2}^2
\end{align*}
Thus, we have
\begin{align*}
    \frac{d}{dt}\|\bar{B}\|_{L^2}^2\le C\|B^{(2)}\|_{\dot{H}^{\sigma_c+\frac{\kappa}{2}}}^2\|\bar{B}\|_{L^2}^2
\end{align*}
An application of Gr\"onwall's inequity yields $\bar{B}=0$
\subsubsection{Gevrey regularity} Invoking the a priori estimate (\ref{Gevery bootstrap inequaltiy}) in Gevrey class, we obtain
\begin{align}\label{closing gevery}
    \|B^{n+1}\|_{X_T}\leq C\mathcal{I}_T(B_0)+C\|B^n\|_{X_T}\|B^{n+1}\|_{X_T}.
\end{align}
Assume that $C\|B^n\|_{X_T}\leq\frac{1}{2}$. Then from (\ref{closing gevery}), we obtain
\begin{align*}
    \|B^{n+1}\|_{X_T}\leq 2C\mathcal{I}_T(B_0).
\end{align*}
By (\ref{small calI}), we conclude that for an arbitrary initial datum $B_0$, $T$ can be chosen sufficiently small such that for $T\leq T_0$, we have
\begin{align*}
    2C^2\mathcal{I}_T(B_0)\leq\frac{1}{2}.
\end{align*}
This condition also holds for $T=\infty$ if $\|B_0\|_{\dot{H}^{\sigma_c}}$ is small enough. Then by induction, we have the uniform-in-$n$ bound on $\|B^n\|_{X_T}$ given by
$$C\|B^n\|_{X_T}\leq\frac{1}{2}.$$
Thus, the solution $B$ obtained above also satisfies this bound.
\newline

\appendix
\section{Proof of Theorem \ref{auxillary theorem for emhd}}
\noindent We consider the following artificial viscosity regularization of (\ref{auxillary equation for emhd})
\begin{equation}\label{regularization for emhd}
        \begin{cases}
            \partial_tB+\epsilon\Lambda^4 B+\Lambda^{\kappa}B+\nabla\times((\nabla\times \Lambda^{-s}B)\times q)=0\\
            B(0,x)=B_0(x)
        \end{cases}
\end{equation}
Multiplying both sides of (\ref{regularization for emhd}) by $e^{-\epsilon\Lambda^4(t-s)}$ and integrating from $0$ to $t$ yields
\begin{align*}
    B(t)=&e^{-\epsilon\Lambda^4t}B_0-\int_0^te^{-\epsilon\Lambda^4(t-s)}\Lambda^{\kappa}B(s,x)ds\\
    &+\int_0^te^{-\epsilon\Lambda^4(t-s)}\nabla\times\Big(q(s,x)\times(\nabla\times\Lambda^{-s} B(s,x))\Big)ds
\end{align*}
This suggests that we consider the space
$$L^{\infty}_tH^{\sigma_c}\cap\{B\in L^{\infty}_tH^{\sigma_c}:B(0)=B_0\}.$$
Note that
\begin{align*}
    \int_0^t\|e^{-\epsilon\Lambda^4(t-s)}\Lambda^{\kappa}B(s)\|_{H^{\sigma_c}}ds\le C\int_0^t(t-s)^{-\frac{\kappa}{4}}\|B(s)\|_{H^{\sigma_c}}ds\le Ct^{1-\frac{\kappa}{4}}\|B\|_{L^{\infty}_tH^{\sigma_c}}
\end{align*}
and
\begin{align*}
    &\int_0^t\left\|e^{-\epsilon\Lambda^4(t-s)}\nabla\times \Big(q(s)\times(\nabla\times \Lambda^{-s}B(s))\Big)\right\|_{\dot{H}^{\sigma_c}}ds\\
    \lesssim&\int_0^t\left\||\xi|^{\sigma_c+1}e^{-\epsilon|\xi|^4(t-s)}\hat{q}*\widehat{\nabla\times \Lambda^{-s}B}(\xi)\right\|_{L^2}ds\\
    \lesssim&\int_0^t(t-s)^{-\frac{\sigma_c+1}{4}}\|B\|_{H^{1-s}}\|\hat{q}\|_{L^1}ds\\
    \lesssim&\int_0^t(t-s)^{-\frac{\sigma_c+1}{4}}\|B\|_{H^{1-s}}\|q\|_{H^{1+\epsilon}}ds\\
    \lesssim&t^{1-\frac{\sigma_c+1}{4}}\|B\|_{L_t^{\infty}H^{\sigma_c}}\|q\|_{L_t^{\infty}H^{1+\epsilon}}
\end{align*}
where we used the fact that $\sigma_c>1$. Then, for a sufficiently short time, denoted by $t^{\epsilon}$, the Banach contraction mapping theorem gives a unique solution $B^{\epsilon}\in L^{\infty}_TH^{\sigma_c}$ for (\ref{regularization for emhd}). Thanks to the uniform estimate in (\ref{sobolev estimates for emhd}), we conclude that there is a $t>0$ such that $t<t^{\epsilon}$ for all $\epsilon$. Using a similar approach, we can show that there exists some $l>0$ such that $\|\partial_tB^{\epsilon}\|_{\dot{H}^{\sigma-l}}$ is bounded uniformly in $\epsilon.$ Then, by the Aubin-Lions-Simon theorem followed by a standard diagonal argument, we get a function $B\in L_t^{\infty}H^{\sigma}$ such that for each $\varphi\in C_0^{\infty}(\mathbb{R}^3)$
$$\|\varphi(B^{\epsilon_k}-B)\|_{L_t^{\infty}H^{\sigma_c-\delta}}\to0\quad\text{as}\quad k\to\infty$$ with $\delta$ sufficiently small. Now, it is straightforward to check that $B$ is a solution to (\ref{regularization for emhd}).

\bibliography{Citations}
\bibliographystyle{plain}
\end{document}